\setlist[enumerate,1]{label=(\roman*), font=\normalfont}
\newtheorem{theorem}{Theorem}[section]
\newtheorem{lemma}[theorem]{Lemma}
\newtheorem{proposition}[theorem]{Proposition}
\newtheorem{corollary}[theorem]{Corollary}
\newtheorem{definition}[theorem]{Definition}
\newtheorem{remarks}[theorem]{Remarks}
\newtheorem{notation}[theorem]{Notation}
\def\N{{\mathbb{N}}}
\def\R{{\mathbb{R}}}
\def\acl{{\rm acl}}
\def\tp{{\rm tp}}
\def\cl{{\rm cl}}
\def\K{\mathcal{K}}
\def\N{\mathbb{N}}
\def\R{\mathbb{R}}
\def\Psi{(P6) }
\def\B{\mathcal{B}}
\def\x{\bar{x}}
\def\y{\bar{y}}
\def\z{\bar{z}}
\def\a{\bar{a}}
\def\dim{{\rm dim}}
\def\m{\mu}
\def\Def{{\rm Def}}
\def\b{\bar{b}}
\def\c{\bar{c}}
\def\loc{{\rm loc}}
\def\u{\bar{u}}
\def\J{\mathcal{J}}
\def\M{\mathcal{M}}
\def\1{\mathrm{1}}
\def\Ind#1#2{#1\setbox0=\hbox{$#1x$}\kern\wd0\hbox to 0pt{\hss$#1\mid$\hss}
\lower.9\ht0\hbox to 0pt{\hss$#1\smile$\hss}\kern\wd0}
\def\Notind#1#2{#1\setbox0=\hbox{$#1x$}\kern\wd0\hbox to 0pt{\mathchardef
\nn="3236\hss$#1\nn$\kern1.4\wd0\hss}\hbox to 0pt{\hss$#1\mid$\hss}\lower.9\ht0
\hbox to 0pt{\hss$#1\smile$\hss}\kern\wd0}
\begin{document}

\title[Measure and Amalgamation]{{Higher amalgamation properties in measured structures}}

\author{David M. Evans}

\address{%
Department of Mathematics\\
Imperial College London\\
London SW7~2AZ\\
UK.}

\email{david.evans@imperial.ac.uk}

\date{11 July, 2022}

\begin{abstract} Using an infinitary  version of the Hypergraph Removal Lem\-ma due to Towsner, we prove a model-theoretic higher amalgamation result. In particular, we obtain an independent amalgamation property which holds in structures which are measurable in the sense of Macpherson and Steinhorn, but which is not generally true in structures which are supersimple of finite SU-rank. We use this to show that some of  Hrushovski's non-locally-modular, supersimple $\omega$-categorical structures are not MS-measurable.

\noindent\textit{2020 MSC:\/} Primary 03C45; Secondary 03C13.
\end{abstract}
\maketitle

\section{Introduction}

In  \cite{T2}, Towsner  gives an infinitary  version of the Hypergraph Removal Lem\-ma  (quoted as Theorem \ref{HRL} here), stated as a rather general measure-theoretic result. We use this to prove a model-theoretic higher amalgamation result (Theorem \ref{main}), again in the presence of a definable measure. In particular, we obtain an independent amalgamation property (Corollary \ref{23}; quoted below as Corollary \ref{intro23}) which holds in structures which are measurable in the sense of Macpherson and Steinhorn.

The statement of this independent amalgamation property makes no mention of measure and it makes sense in any supersimple structure of finite SU-rank. However, it is not generally true in structures which are supersimple of finite SU-rank. In Theorem \ref{HrCon}, we use a Hrushovski construction to produce a structure which is $\omega$-categorical, supersimple of SU-rank 1 and which does not satisfy the conclusion of Corollary \ref{23}. It follows that this structure is not MS-measurable.

The question of whether any (non-trivial)  $\omega$-categorical Hrushovski construction can be MS-measurable is open, and this is an important special case of the more general question of whether $\omega$-categorical MS-measurable structures are necessarily one-based.  In ongoing work \cite{Marimon}, Marimon has used a different and more generally applicable approach to show that a much wider class of $\omega$-categorical, supersimple Hrushovski constructions are not MS-measurable. It is also unknown whether any of the $\omega$-categorical Hrushovski constructions can be pseudofinite. In Remarks \ref{Psf} we note that, as a by-product of our approach to non-MS-measurability, we obtain information about what coarse pseudofinite dimension would have to be be in such a structure, if it were pseudofinite.

We begin with a rough outline of what we mean by a `higher amalgamation property.' This is adapted to the form of the Towsner's paper, so is slightly different from other presentations (for example in \cite{Hr:groupoids}). 

Suppose $L$ is a first-order language and $\M$ is an $L$-structure with domain $M$ and $C \subseteq M$. Let $T$ denote the theory of $\M$. We will assume that $\M$ is `large' (for example $\aleph_1$-saturated, if  $L$ is countable) and $C$ has smaller cardinality than that of $M$. Suppose $n \geq 2$ is a natural number. In an \textit{$n$-amalgamation problem} over $C$ we are looking for an $n$-tuple 
$\b = (b_1,\ldots, b_n)$ which satisfies certain constraints on subtuples $\b_I = (b_i : i \in I)$ with $I \subseteq [n] = \{1,\ldots, n\}$ of size $n-1$. The constrains should be in terms of the parameters $C$, say in the form of satisfying a type, or partial type, $\Phi_I(\x_I)$ over $C$. Here, $\x = (x_1,\ldots, x_n)$ is an $n$-tuple of variables and $x_I = (x_i : i \in I)$. So, subject to reasonable   compatibility requirements such as $\Phi_I(\x_I)$ and $\Phi_J(\x_J)$ having the same restriction to $\x_{I\cap J}$, we are looking for a solution $\b \models \bigwedge_I \Phi_I(\x_I)$, or, in terms of the sets $A_I = \{\a \in M^n : \M \models \Phi_I(\a)\}$, an element of $\bigcap_I A_I$. If  the $\Phi_I$ are complete types over $C$, we might refer to this as a \textit{type-amalgamation} problem. 

There are well-known variations on this. If $\M$ carries a notion of independence (or  dimension on definable sets) then in an \textit{independent} $n$-amalgamation problem over $C$, we are also looking for the $b_i$ to be independent over $C$. Of course, in this case, the individual constraints $\Phi_I(\x_I)$ should have solutions which are independent over $C$. For example, if $T$ is stable, then for all $n$, any independent type-amalgamation problem over a model (with $n$ complete types over the model) has a solution. If $T$ is simple, then this is true for $n = 2, 3$ (the case $n = 3$ is of course the Independence Theorem of Kim and Pillay). However, there are examples of supersimple theories of finite $SU$-rank which do not have independent $4$-amalgamation over a model.

Our main result, Theorem \ref{main}, is an $n$-amalgamation property which holds in a general context where the set of $n$-tuples from which we are looking for a solution carries a well-behaved probability measure (see Section \ref{setup} for a precise statement). The general form of the statement is that we assume there our $n$-amalgamation problem has `degenerate' solutions $\b = (b_1,\ldots, b_n)$, where the $b_i$ are interalgebraic over $C$. The conclusion is that the set of all solutions is of positive measure (and in particular, there are solutions where the $b_i$ are not interalgebraic). Of course, for this to work, we need to ensure that there are enough solutions to the $\Phi_I$: in the above notation, we require that the measure of $A_I$ is positive, for each $(n-1)$-set $I$.

If $\M$ is an MS-measurable structure (see Section \ref{MSsection} for definitions and background) there is a strong interaction between dimension and measure. The structure  $\M$ is supersimple of finite $SU$-rank and each definable subset has an associated dimension (which can be taken as $SU$-rank for the purposes of this Introduction). Each definable set also carries a (definable) probability measure on its definable subsets with the property that a subset has positive measure if and only if it has the same dimension as the ambient definable set. 

 From Theorem \ref{main} we obtain the following independent amalgamation result (Corollary \ref{23}), which holds in any MS-measurable $M$. 
\begin{corollary} \label{intro23} Suppose $\M$ is an MS-measurable structure and $S_1,\ldots, S_n$ are infinite $C$-definable sets, for some finite $C \subset M$. Let $S = S_1 \times \ldots \times S_n$ and for $I \subset [n] = \{1,\ldots, n\}$, let $\pi_I : S \to \prod_{i\in I} S_i$ be the projection map. Suppose  $E \subseteq S$ is a $C$-definable subset such that:
\begin{enumerate}
\item[(a)] if $I \subset [n]$ and $\vert I \vert = n-1$, then $\dim(\pi_I(E)) = \sum_{i \in I} \dim(S_i)$ , and
\item[(b)] if $(b_1,\ldots, b_n) \in E$, then $b_i \in \acl(C \cup\{ b_j : j \neq i\})$.
\end{enumerate}
 Then 
\[\dim(\{\b \in S : \pi_I(\b)\in \pi_I(E) \mbox{ for all } I \mbox{ with } \vert I \vert = n-1\}) = \dim(S).\]
\end{corollary}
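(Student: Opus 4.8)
The plan is to deduce this directly from Theorem \ref{main}, reading hypotheses (a) and (b) as exactly the input that theorem requires and translating its measure-theoretic conclusion back into a dimension statement via the defining property of MS-measurability.

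First I would set up the measure. Passing if necessary to a large (say $\aleph_1$-saturated) elementary extension, which changes neither the dimension nor the positivity of the measure of a definable set, I normalise the MS-measure on each factor $S_i$ to a probability measure and equip $S = S_1 \times \cdots \times S_n$ with the corresponding product measure. The multiplicativity (Fubini) property of the MS-measuring function guarantees that this product measure agrees with the normalised MS-measure on $S$ and is of the well-behaved type required by the setup of Section \ref{setup}. The crucial dictionary, recorded in Section \ref{MSsection}, is that a $C$-definable subset of a definable set has positive measure if and only if it has full dimension.

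Next I would verify the hypotheses of Theorem \ref{main} for the $n$-amalgamation problem whose $I$-th constraint is the $C$-definable partial type $\Phi_I(\x_I)$ asserting $\x_I \in \pi_I(E)$, so that $A_I = \{\a \in S : \pi_I(\a) \in \pi_I(E)\} = \pi_I^{-1}(\pi_I(E))$. Compatibility of the $\Phi_I$ on overlaps holds because projections compose: the restriction of $\pi_I(E)$ to the coordinates $I \cap J$ equals $\pi_{I\cap J}(E)$. Hypothesis (a) says $\dim(\pi_I(E)) = \sum_{i\in I}\dim(S_i)$, so $\pi_I(E)$ is full-dimensional in $\prod_{i\in I}S_i$; taking the preimage adds back only the single missing factor $S_j$ (with $j \notin I$), whence $\dim(A_I) = \sum_{i\in I}\dim(S_i) + \dim(S_j) = \dim(S)$, and so $A_I$ has positive measure. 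Since each $S_i$ is infinite we have $\sum_{i\in I}\dim(S_i) \geq 1$, so (a) forces $E \neq \emptyset$; then hypothesis (b) supplies the degenerate solutions, as every $\b \in E$ lies in $\bigcap_I A_I$ (because $\pi_I(\b) \in \pi_I(E)$ for each such $I$) and, by (b), satisfies $b_i \in \acl(C \cup \{b_j : j \neq i\})$ for all $i$, i.e.\ is a degenerate solution in the sense of Theorem \ref{main}.

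Applying Theorem \ref{main} then yields that the solution set $\bigcap_{\vert I\vert = n-1} A_I = \{\b \in S : \pi_I(\b) \in \pi_I(E) \text{ for all } I \text{ with } \vert I\vert = n-1\}$ has positive measure, and the dictionary above upgrades this to $\dim(\cdot) = \dim(S)$, as required. The main obstacle is not this bookkeeping but confirming that the MS-measurable data genuinely instantiate the abstract probability-measure hypotheses of Theorem \ref{main}: one must check that normalising and multiplying the MS-measures produces a measure space of the kind on which Towsner's lemma operates (in particular, a countably additive measure on the relevant $\sigma$-algebra generated by the definable sets), and that the equivalence ``positive measure $\Leftrightarrow$ full dimension'' is exactly the bridge converting the qualitative conclusion of Theorem \ref{main} into the sharp dimension equality stated here.
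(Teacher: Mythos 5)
There is a genuine gap: you have verified the hypotheses of the Introduction's \emph{informal paraphrase} of Theorem \ref{main} (``the sets $A_I$ have positive measure and degenerate solutions exist'') rather than the hypotheses of Theorem \ref{main} itself. The theorem actually requires three conditions: (a) $\nu^J(\pi_J(E)) > 0$; (b) a \emph{uniform} bound $l$ on the size of the fibres $\pi_I^{-1}(\a)\cap E$ over all $(n-1)$-subsets $I$; and (c) a constant $k$ such that $\nu^J(\pi_J(F\cap E)) \leq k\,\nu^I(\pi_I(F\cap E))$ for every definable $F$. Your proposal establishes (a) essentially as the paper does, but for (b) it records only that each $\b \in E$ is interalgebraic over $C$ --- the theorem needs the uniform bound $l$, which the paper extracts from hypothesis (b) of the corollary by a compactness argument --- and it omits condition (c) entirely. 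Condition (c) is the substantive point and occupies most of the paper's proof: for definable $X \subseteq E$ one decomposes $\pi_I(X)$ according to fibre size and applies (i) and (iii) of Definition \ref{MSmeas} to get $\mu(\pi_I(X)) \leq \mu(X) \leq l\,\mu(\pi_I(X))$; then, when $\dim(X) = \dim(E)$, hypothesis (a) of the corollary guarantees $\dim(\pi_I(X)) = \dim(S_I)$, so the normalised measures compare with constant $k = l\,\mu(S_I)/\mu(S_J)$ (and when $\dim(X) < \dim(E)$ both sides vanish). Without (c) the appeal to Theorem \ref{main} is unjustified, so the proof as written does not go through; note that (c) is exactly where the interaction between dimension, measure and the bounded algebraicity of $E$ enters, and it cannot be recovered from positivity of the $\nu^V(A_I)$ alone.

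A secondary, more minor inaccuracy: the measure the paper feeds into Theorem \ref{main} is not literally a product measure but the normalised dimension-killing measure $\nu^S(D) = \mu(D)/\mu(S)$ when $\dim(D) = \dim(S)$ (and $0$ otherwise), defined on \emph{all} definable subsets of $S$; the product of the $\nu^{S_i}$ is a priori defined only on the $\sigma$-algebra generated by definable rectangles, which is too small to contain a general definable $E$. The paper instead defines $\nu^S$ directly, checks that its push-forwards along the $\pi_I$ coincide with the $\nu^{S_I}$, and verifies the Definability and Fubini conditions from (ii) and (iii) of Definition \ref{MSmeas}. Your final step --- the ``positive measure iff full dimension'' dictionary upgrading $\nu^V(\bigcap_I A_I) > 0$ to the dimension equality --- is correct and matches the paper.
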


Note that this does not tell us anything if $\M$ has trivial algebraic closure. Note also that it does not refer to the measure, so it makes sense in any supersimple theory (more properly, any $S_1$-theory) of finite $SU$-rank. In Section \ref{App} we give an example of a supersimple structure of $SU$-rank 1 which does not satisfy the above result: so we have an independent amalgamation result which holds in MS-measurable structures, but which  is not  generally true in  finite rank supersimple structures.

This paper is a revised version of some unpublished notes written in 2011-12. The original version made use of Towsner's unpublished article \cite{T} and proved Theorem \ref{main} under a stronger assumptions on the definability of the measure and the behaviour of the measure under projection maps with finite fibres. In 2019, I sent a copy of the notes to Ehud Hrushovski who observed that that these assumptions could be weakened. He also gave examples of additional contexts in which the weaker assumptions would hold: cf.  Section \ref{NIP} here. 

Towsner's published paper \cite{T2} contains a reworking of \cite{T} which involves a weaker assumption on the definability of the measure. In revising the original notes, I have therefore rewritten the proof of Theorem \ref{main} to follow the approach and notation of \cite{T2}. 

The structure of the paper is as follows. In Section \ref{setup} we give the necessary notation and background to state Towsner's version of the Hypergraph Removal Lemma from \cite{T2}. In Section \ref{Main_section} we deduce the main result Theorem \ref{main} from this. Our  result is related to a standard deduction of Szemer\'edi's Theorem from the Hypergraph Removal Lemma: we make this explicit in Section \ref{Szem}. In Section \ref{MSsection}, we discuss MS-measurability and prove Corollary \ref{23}, stated above. Additional examples in NIP theories are mentioned briefly in Section \ref{NIP}. In Section 4, we discuss the $\omega$-categorical Hrushovski constructions and their relationship to various open questions around MS-measurable $\omega$-categorical structures. The main result of the section is Theorem \ref{HrCon}, where we construct an $\omega$-categorical structure which is of $SU$-rank 1 and which does not satisfy the amalgamation property in Corollary \ref{23}.

{\sc Acknowledgements:\/} The Author is very much indebted to Ehud Hrushovski for his comments and suggestions on a previous version of this paper.

\section{An amalgamation theorem for measured structures}

\subsection{Measured structures} \label{setup}

The following set-up is taken from Towsner's paper \cite{T2}. Chapter 1 of \cite{K} is a convenient reference for the basic measure theory we need.

We work with a structure $\M$ with domain $M$. The following notation is introduced in Section 2 of \cite{T2}. If $V$ is a finite set of indices, then a $V$-tuple from $M$ is a function $\a_V : V \to M$ and we denote the set of these by $M^V$. A $V$-tuple of variables will generally be denoted by $\x_V$. If $I \subseteq V$ then $\a_I \in M^I$ is the restriction of this to $I$. If $U, W$ are disjoint sets, we write $\a_U \cup \a_W$ for the $U \cup W$-tuple extending $\a_U$ and $\a_W$. If $B \subseteq M^{U\cup W}$ and $\a_W \in M^W$, then $B(\a_W)$ denotes the fibre (or `slice') $\{\a_U \in M^U : \a_U \cup \a_W \in B\}$.

In what follows, $V$ is a fixed finite set of indices $V = \{1,\ldots, n\} = [n]$ for some $n \in \N$. We often denote $\a_V$ or $\x_V$ simply by $\a$, $\x$, dropping the reference to $V$.

\begin{definition} \label{BA}\rm (\cite{T2}, Definition 4.1) Suppose that for each $U \subseteq V$  we have a Boolean algebra $\B^0_U$ of subsets of $M^U$ such that:
\begin{itemize}
\item $\emptyset \in \B^0_U$;
\item $\B^0_U \times \B^0_W \subseteq \B^0_{U\cup W}$ for disjoint $U, W \subseteq V$;
\item If $U, W \subseteq V$ are disjoint, $\a_W \in M^W$ and $B \in \B^0_{U\cup W}$, then $B(\a_W) \in \B^0_U$.
\end{itemize}
For $I \subseteq V$ we define $\B^0_{V, I}$ to be the Boolean algebra generated by subsets $\{ \a_V \in M^V : \a_I \in B\}$, where $B \in \B^0_I$. 

In all cases we will drop the superscript $0$ to indicate the $\sigma$-algebra generated by the Boolean algebra.
\end{definition}

The main result we need from \cite{T2} is Theorem \ref{HRL} below. When we use this,   $\B^0_V$ will  consist of the parameter-definable subsets of $M^V$, so the reader may assume this from now on. We then refer to the elements of $\B_V$ as Borel sets. If $X \subseteq M$, then $\B^0_V(X)$ will denote the $X$-definable subsets of $M^V$, and we use a corresponding variation in the notation for the algebras introduced above. We will assume sufficient saturation, so that it makes sense to identify a formula defining a Borel set with its solution set in $M$. In particular, if the language is countable, we assume that $\M$ is $\aleph_1$-saturated. If the model is multi-sorted, then we can restrict each variable to having values in a particular sort.

Suppose, with the above notation,  that $\nu = \nu^V$ is a probability measure on $(M^V; \B_V)$. If $I \subseteq V$, then  let $\nu^I$ denote the push-forward measure on $(M^I; \B_I)$. So for $A \in \B_I$, we have  $\nu^I(A) = \nu(\pi_I^{-1}(A))$, where $\pi_I : M^V \to M^I$ is the projection map.

Recall that if $\nu$ is a probability measure on a $\sigma$-algebra $\B$ of subsets of a set $N$, then $L^\infty(\B)$ denotes the space of $\B$-measurable functions $N \to \R$ which are essentially bounded, that is, are bounded outside a set of measure $0$.

Henceforth, we shall assume that the following conditions on $\nu$ hold.

\begin{itemize}
\item \textbf{(Definability)} For all $J \subseteq V$ and $B \in \B_V$, the function $x_{J} \mapsto \nu^{V\setminus J}(B(x_{J}))$ is $\B_{J}$-measurable.
\item \textbf{(Fubini)} Suppose $J \subseteq V$ and  $f \in L^\infty(\nu^V)$. Then $\int f d\nu^V = \int\int f d\nu^J d\nu^{V\setminus J}$. 
\end{itemize}

\smallskip

\begin{remarks}\rm \label{Rem22}

(1) It would be more correct to refer to the Definability condition as `Borel definability', but we will not do this.\newline
(2) It suffices to check that the Definability property holds for all $B \in \B_V^0$, as the set of elements of $\B_V$ for which it holds is a $\sigma$-subalgebra.\newline
(3) The Definability property is a weaker requirement than asking that $\nu$ is invariant (over the emptyset, or a small submodel).\newline
(4) The Definability property implies that, in the statement of the Fubini condition, the map $$\x_{V\setminus J} \mapsto \int f(\x_J\x_{V\setminus J}) d\nu^J(\x_J)$$ is $\B_{V\setminus J}$-measurable for almost all $\x_{V\setminus J} \in M^{V\setminus J}$. This is a standard argument using approximation by indicator functions of sets in $\B_V$. The same sort of argument shows that it suffices to check the Fubini condition in the case where  $f$ is the indicator function $\1_B$ of a set $B \in \B^0_V$. 
\end{remarks}

The following is Towsner's infinitary analogue of the Hypergraph Removal Lemma. We refer to \cite{T, T2} for a discussion of the origins of the proof and the finitary versions of this. The statement follows by combining Theorem 5.3 and Lemma 5.4 of \cite{T2}. Theorem 5.3 of \cite{T2} holds under weaker conditions than the Fubini property (involving the notion of \textit{$J$-regularity} of $\nu^V$), but we will not make use of this. Lemma 5.4 of \cite{T2} states that the Definability and Fubini conditions imply $J$-regularity of $\nu^V$for all $J \subseteq V$. 

\begin{theorem} \label{HRL} (\cite{T2}, Theorem 5.3) Suppose $\M$ is sufficiently saturated and $\B^0_V$ consists of the definable subsets of $M^V$. Suppose  $\nu^V$ is a probability measure on $\B^V$ which satisfies the Definability and Fubini conditions. Let $k < n = \vert V \vert$ and $\J = [V]^k$, the set of $k$-subsets from $V$. 

Let $A_I \in \B_{V, I}$ for $I \in \J$. Suppose there is $\delta > 0$ such that
 whenever $B_I \in \B_{V,I}^0$ are such that $\nu^V(A_I\setminus B_I) < \delta$, then $\bigcap_{I \in \J} B_I \neq \emptyset$.

Then $\nu^V(\bigcap_{I \in \J} A_I) > 0.$  $\Box$
\end{theorem}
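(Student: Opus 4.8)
The plan is to prove the contrapositive, which is exactly the measure-theoretic Hypergraph Removal Lemma. So I would assume $\nu^V(\bigcap_{I \in \J} A_I) = 0$ and show that for \emph{every} $\delta > 0$ there are sets $B_I \in \B^0_{V,I}$ with $B_I \subseteq A_I$, $\nu^V(A_I \setminus B_I) < \delta$, and $\bigcap_{I\in\J} B_I = \emptyset$; this directly negates the hypothesis for the given $\delta$. Since definable sets are dense in $\B_{V,I}$ in the measure $\nu^V$, I may first replace each $A_I$ by a definable set up to arbitrarily small measure, so the real content is the cleaning step described below.

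First I would set up the regularity decomposition. For each $I \in \J$ let $\B_I^{<}$ be the $\sigma$-algebra generated by all $\B_{V,I'}$ with $I' \subsetneq I$ — the lower-order structure supported on the proper subsets of $I$ — and write $\1_{A_I} = \Ex[\1_{A_I} \mid \B_I^{<}] + h_I$, where $h_I$ is the pseudorandom remainder. The point of $J$-regularity, which by Lemma 5.4 of \cite{T2} follows from the Definability and Fubini conditions we are assuming, is precisely that each $h_I$ has vanishing correlation against products of functions pulled back from proper subsets of $I$. Consequently, when one expands $\Ex\big[\prod_{I\in\J} \1_{A_I}\big]$ and replaces each factor by its conditional expectation plus remainder, every cross term containing an $h_I$ integrates to $0$. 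This is the counting lemma in this setting, and it is obtained by iterating the Fubini condition to integrate out coordinate blocks one at a time.

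With the counting lemma available, the removal step runs as follows. Using the decomposition, approximate each $A_I$ by a finite union of cells on which the conditional density $\Ex[\1_{A_I}\mid \B_I^{<}]$ is essentially constant, and form $B_I$ by discarding from $A_I$ those cells on which this density falls below a threshold $\eta = \eta(\delta)$. Since the total measure carried by the low-density cells is at most a bounded multiple of $\eta$, taking $\eta$ small enough gives $\nu^V(A_I\setminus B_I) < \delta$ for every $I$. Now if $\bigcap_{I\in\J} B_I$ were nonempty, it would contain a compatible family of cells on which every $\Ex[\1_{B_I}\mid \B_I^{<}]$ is at least $\eta$; the counting lemma would then bound $\nu^V(\bigcap_{I} B_I)$ below by essentially the product of these densities, hence strictly positive. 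As $B_I \subseteq A_I$, this forces $\nu^V(\bigcap_{I} A_I) > 0$, contradicting our assumption. Therefore $\bigcap_{I} B_I = \emptyset$, and the contrapositive is complete.

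I expect the counting lemma to be the main obstacle. In the hypergraph case $k \geq 3$ it is genuinely delicate: unlike the graph case, the pseudorandomness of a single remainder $h_I$ must be measured relative to the lower-order $\sigma$-algebras rather than against the full product algebra, and controlling the error terms requires an induction over the levels $0, 1, \ldots, k-1$ in which the regularity at each level is strong enough to survive the refinement forced by the next. It is exactly this multi-scale control that the notion of $J$-regularity packages, and checking that Definability and Fubini supply enough of it — so that the cross terms really vanish and the surviving cells really yield positive measure — is where the work lies; the outer removal bookkeeping is then routine.
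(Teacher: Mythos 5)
First, a point of comparison: the paper does not prove this theorem at all --- it is quoted as a black box from Towsner (\cite{T2}, Theorem 5.3 combined with Lemma 5.4), so your sketch has to be measured against Towsner's argument. Its outer shape (pass to the contrapositive, clean each $A_I$ by thresholding a conditional density against the lower-order $\sigma$-algebra, then invoke a counting lemma to contradict $\nu^V(\bigcap_I A_I)=0$) is indeed the right strategy. The genuine gap is your second paragraph. The assertion that every cross term containing a remainder $h_I$ integrates to zero conflates two different orthogonality statements: that $h_{I_0}$ is orthogonal to $\B_{I_0}^{<}$-measurable functions is true by definition of conditional expectation, but the cross terms in $\Ex\bigl[\prod_{I\in\J}\1_{A_I}\bigr]$ pair $h_{I_0}$ against products $\prod_{I\neq I_0}g_I$ of functions measurable in the \emph{other} top-level algebras $\B_{V,I}$, which are not pulled back from proper subsets of $I_0$. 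Concretely, with $V=\{1,2,3\}$ and $k=2$, Fubini reduces $\Ex[h_{\{1,2\}}g_{\{1,3\}}g_{\{2,3\}}]$ to $\int h_{\{1,2\}}\,g\,d\nu^{\{1,2\}}$, where $g(x_1,x_2)=\int g_{\{1,3\}}(x_1,x_3)\,g_{\{2,3\}}(x_2,x_3)\,d\nu^{\{3\}}(x_3)$; the Definability condition makes this codegree-type function $\B_{\{1,2\}}$-measurable, but it is in general \emph{not} $\B_{\{1,2\}}^{<}$-measurable, so the orthogonality of $h_{\{1,2\}}$ gives nothing. If such cross terms vanished for free, weak (Frieze--Kannan-type) regularity alone would yield triangle removal, which is known to fail; making them negligible is precisely the content of Towsner's $J$-regularity and the induction over levels (his Lemma 5.4 and the proof of Theorem 5.3). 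Your closing paragraph tacitly concedes this, but then the proof as written rests on a counting lemma that is not proved and is false as you state it.

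There is a second concrete gap at the definability interface. In the contrapositive you must exhibit $B_I\in\B^0_{V,I}$ --- \emph{definable} sets --- whose intersection is literally empty, since the theorem's hypothesis quantifies over definable perturbations and demands nonemptiness as sets, not positive measure. Your cleaning produces sets of the form $A_I\cap\{\Ex[\1_{A_I}\mid\B_I^{<}]\geq\eta\}$, which are only Borel (the level sets of a conditional expectation lie in the $\sigma$-algebra, not in $\B^0_{V,I}$), and replacing them afterwards by definable approximations does not preserve empty intersection: the approximants may meet in a nonempty null set, which defeats the hypothesis you are trying to negate. This is exactly where the saturation of $\M$ does real work in Towsner's proof, and your sketch never invokes it beyond the throwaway density remark. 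So: right overall route, but the two load-bearing components --- the counting lemma via $J$-regularity, and the passage between Borel cleaned sets and definable sets with genuinely empty intersection --- are missing.
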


\subsection{A Model-theoretic Corollary} \label{Main_section}
In the following, we give  model-theoretic conditions  which allows us to verify the hypotheses in Theorem \ref{HRL}. The set-up is:
\begin{itemize}
\item $\M$ is an $\aleph_1$-saturated structure in a  countable language $L$;
\item $V = \{1,\ldots, n\}$ is a set of indices (each associated to a particular sort); we let $J = \{1,\ldots, n-1\} \subseteq V$ and $\J$ the set of $(n-1)$-subsets of $V$;
\item for each $I \subseteq V$, $\B_I^0$ is the Boolean algebra  of $M$-definable subsets of $M^I$;
\item $\nu = \nu^V$ is a probability measure  on $\B_V$ which satisfies the Definability and Fubini conditions.
\end{itemize}

For $I \subseteq V$ let $\pi_I$ denote the projection map $M^V \to M^I$ and denote by $\nu^I$ the push-forward measure induced on $\B_I$ by $\nu$. Each $\nu^I$ also satisfies the corresponding Definability and Fubini properties.

\begin{theorem} \label{main} With the above notation and assumptions,  suppose $E \in \B_V$  is such that:
\begin{itemize}
\item[(a)] $\nu^J(\pi_J(E)) > 0$;
\item[(b)] there is $l \in \N$ such that for all  $I \in \J$ and $\a \in M^I$, we have that $\pi_I^{-1}(\a)\cap E$ has at most $l$ elements;
\item[(c)]  there is $k > 0$  such that if $F \in \B^0_V$, then $\nu^J(\pi_J(F\cap E)) \leq k\nu^I(\pi_I(F \cap E))$ for all $I \in \J$. 
\end{itemize}
Then $\nu^V(\{ \b \in M^V: \pi_I(\b) \in \pi_I(E) \mbox{ for all  } I \in \J\}) > 0.$
\end{theorem}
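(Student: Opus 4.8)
The plan is to deduce this directly from the Hypergraph Removal Lemma, Theorem \ref{HRL}, applied with subset-size $n-1$, so that the index family there is exactly $\J$. For each $I \in \J$ I set $A_I = \pi_I^{-1}(\pi_I(E))$. Since $J = \{1,\dots,n-1\}$ is itself a member of $\J$ and $|\J| = \binom{n}{n-1} = n$, the target set in the conclusion is precisely $\bigcap_{I \in \J} A_I = \{\b \in M^V : \pi_I(\b) \in \pi_I(E) \text{ for all } I \in \J\}$, so it suffices to show $\nu^V(\bigcap_{I\in\J} A_I) > 0$. For this we must first know that each $A_I$ lies in $\B_{V,I}$, i.e. that $\pi_I(E) \in \B_I$; this is where the finite-fibre hypothesis (b) enters, and in fact the measurability of the projections $\pi_I(F\cap E)$ for definable $F$ is already presupposed by the well-posedness of condition (c) (take $F = M^V$). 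Granting this, Theorem \ref{HRL} reduces the proof to producing a single $\delta > 0$ such that whenever $B_I \in \B^0_{V,I}$ satisfy $\nu^V(A_I \setminus B_I) < \delta$ for all $I \in \J$, then $\bigcap_{I\in\J} B_I \neq \emptyset$.

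I would verify this by contraposition, in the style of the standard deduction of Szemer\'edi's theorem from hypergraph removal. Assume $\bigcap_{I\in\J} B_I = \emptyset$. Every point of $E$ lies in $\bigcap_{I\in\J} A_I$, so no point of $E$ can lie in all of the $B_I$; hence each $\b \in E$ fails to lie in some $B_I$, giving $E = \bigcup_{I\in\J}(E \setminus B_I)$. Applying $\pi_J$ and using subadditivity of $\nu^J$,
\[ \nu^J(\pi_J(E)) \leq \sum_{I \in \J} \nu^J(\pi_J(E\setminus B_I)). \]

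The crux is to convert each term on the right, which lives on the fixed reference projection $\pi_J$, into a bound on the approximation error $\nu^V(A_I\setminus B_I)$ at the index $I$ — and this is exactly what hypothesis (c) supplies. Since $B_I \in \B^0_{V,I}$, I may write $B_I = \pi_I^{-1}(C_I)$ with $C_I \in \B^0_I$; then $F_I := M^V \setminus B_I = \pi_I^{-1}(M^I\setminus C_I) \in \B^0_V$ and $F_I \cap E = E\setminus B_I$. Condition (c) applied to $F_I$ gives $\nu^J(\pi_J(E\setminus B_I)) \leq k\,\nu^I(\pi_I(E\setminus B_I))$, while the cylinder identities $\pi_I(E\setminus B_I) = \pi_I(E)\setminus C_I$ and $A_I \setminus B_I = \pi_I^{-1}(\pi_I(E)\setminus C_I)$, together with the definition of the push-forward, yield $\nu^V(A_I\setminus B_I) = \nu^I(\pi_I(E)\setminus C_I) = \nu^I(\pi_I(E\setminus B_I))$. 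Combining these gives $\nu^J(\pi_J(E)) \leq k\sum_{I\in\J}\nu^V(A_I\setminus B_I)$. Writing $\alpha = \nu^J(\pi_J(E))$, which is strictly positive by hypothesis (a), and recalling $|\J| = n$, I would take $\delta = \alpha/(kn)$: if all $\nu^V(A_I\setminus B_I) < \delta$, then the right-hand side is strictly less than $kn\delta = \alpha$, contradicting the displayed inequality. Hence $\bigcap_{I\in\J}B_I \neq\emptyset$, the hypothesis of Theorem \ref{HRL} holds, and its conclusion gives $\nu^V(\bigcap_{I\in\J}A_I) > 0$, as required.

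The genuinely creative step is the recognition that condition (c) is precisely the comparison between the fixed projection $\pi_J$ and the projections $\pi_I$ that lets the covering estimate coming from $\bigcap_{I}B_I = \emptyset$ be read off as control of the approximation errors $\nu^V(A_I\setminus B_I)$; everything else is bookkeeping with push-forwards and subadditivity. The main technical point is measurability: the sets $\pi_I(E)$ and $\pi_I(E\setminus B_I)$ must genuinely lie in $\B_I$, and this rests on the uniform finiteness of fibres in hypothesis (b) — which, notably, is the only use of (b), since the bound $l$ itself never appears in the estimate. I do not expect the invocation of Theorem \ref{HRL} to cause difficulty, as the Definability and Fubini conditions are assumed throughout; the only real content is the removal estimate just sketched.
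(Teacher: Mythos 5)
Your proof is correct, and it takes a genuinely different --- and shorter --- route than the paper's. Both arguments reduce to verifying the hypothesis of Theorem \ref{HRL}, but the paper does so \emph{constructively}: using hypothesis (b), Lemma \ref{lemma2} replaces $E$ by a countable intersection $F \subseteq E$ of definable sets with $E$ on which the fibre type $\tp(a_n/\a_J X)$ is isolated and (almost everywhere) unchanged by countably many further parameters; it takes $A_I = \pi_I^{-1}(\pi_I(F))$, picks $\b_J \in \bigcap_I \pi_J(F \cap B_I)$ of positive measure (via Lemma \ref{lem}(3), which is where (c) enters there), and uses the isolation to see that a single fibre element $b_n$ over $\b_J$ lies in every $B_I$ simultaneously --- the point being that $\b_J \in \pi_J(F\cap B_I)$ only yields a witness $b_n'$ depending on $I$, and interchangeability of the witnesses is exactly what the algebraicity hypothesis buys. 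You avoid this entirely by arguing contrapositively with $A_I = \pi_I^{-1}(\pi_I(E))$ itself: if $\bigcap_I B_I = \emptyset$ then $E = \bigcup_I (E\setminus B_I)$; since $\pi_I^{-1}$ preserves Boolean operations, $\B^0_{V,I} = \{\pi_I^{-1}(C) : C \in \B^0_I\}$ as you claim, so each $E \setminus B_I$ equals $E \cap F_I$ for the definable cylinder $F_I = \pi_I^{-1}(M^I \setminus C_I) \in \B^0_V$, and (c) together with your push-forward identities gives $\nu^J(\pi_J(E)) \leq k\sum_I \nu^V(A_I \setminus B_I) < kn\delta$, so $\delta = \alpha/(kn)$ yields the contradiction $\alpha < \alpha$. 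This is precisely the measure-theoretic transplant of the finitary counting argument sketched in Section \ref{Szem}, with (c) playing the role of ``each removed edge destroys a controlled $\nu^J$-mass of trivial copies.'' What your route buys: it dispenses with Lemmas \ref{lem}(2)--(3) and \ref{lemma2}, and indeed with hypothesis (b) altogether, so you prove a formally stronger statement; your reading that (a) and (c) already presuppose measurability of $\pi_J(E\cap F)$ and $\pi_I(E\cap F)$ for definable $F$ is legitimate, since it is the same presupposition the paper itself makes in the proof of Lemma \ref{lem}(1) (and (b)'s real role is to secure (c), as in Corollary \ref{23}). What the paper's longer route buys: an explicit witness in $\bigcap_I B_I$, and the almost-everywhere type-isolation statement of Lemma \ref{lemma2}, which has some independent interest. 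Two details you handle correctly but might flag explicitly: at $I = J$ the instance of (c) you invoke is the trivial one (alternatively, note that $k \geq 1$ follows from (a) and (c) applied with $F = M^V$ and $I = J$), and $A_I \in \B_{V,I}$ because $\pi_I(E) \in \B_I$ is presupposed by (c) with $F = M^V$.
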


\begin{remarks}\rm \label{RN} We make some comments about the conditions on $E$. By the second condition, we should not expect that $\nu(E) > 0$. However, suppose that we also have a measure $\lambda$ on the definable subsets of $E$ with $\lambda(E) > 0$ and $r, s >0$ such that for all $F \in \B_0^V$ and $I \neq J$ we have 
$$r \nu^J(\pi_J(F\cap E)) \leq \lambda(F\cap E) \leq s \nu^I(\pi_I(F\cap E)).$$
Then $\nu^J(\pi(F\cap E)) \leq \frac{s}{r} \nu^I(\pi_I(F\cap E))$, so the third condition holds.

In general, without assuming the existence of such a $\lambda$, we can define a measure $\nu_I^J$ on $\pi_J(E)$ by setting $\nu_I^J(X) = \nu^I(\pi_I(\pi_J^{-1}(X)\cap E))$. Condition (c) implies that $\nu^J$ is absolutely continuous with respect to $\nu_I^J$ and $k$ is a bound on the Radon-Nikod\'ym derivative. 
\end{remarks}

Before proving Theorem \ref{main} we note the following lemmas.

\begin{lemma}\label{lem} With the notation as in Theorem \ref{main}, suppose  $F \subseteq E$ is a countable intersection of sets in $\B^0_V$ with $E$. Then:
\begin{enumerate}
\item[(1)] $\nu^J(\pi_J(F)) \leq k\nu^I(\pi_I(F))$.
\item[(2)] If $J\neq I \in \J$ and $C \in \B^0_I$ then 
\[\nu^J(\pi_J(F\setminus\pi_I^{-1}(C)) \geq \nu^J(\pi_J(F)) - k\nu^I(C\cap \pi_I(F)).\]
\item[(3)] If $J\neq I \in \J$ and $B \in \B^0_I$, then 
\[\nu^J(\pi_J(F\cap \pi_I^{-1}(B)) \geq \nu^J(\pi_J(F)) - k\nu^I(\pi_I(F)\setminus B).\]
\end{enumerate}
\end{lemma}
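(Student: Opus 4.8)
The plan is to prove the three parts of Lemma \ref{lem} in sequence, using the key measure-theoretic hypothesis (c) of Theorem \ref{main} together with continuity of measure along decreasing intersections. For part (1), I would first reduce the claim about an arbitrary countable intersection $F = E \cap \bigcap_{m} F_m$ (with each $F_m \in \B^0_V$) to the hypothesis (c), which is only stated for single sets $F \in \B^0_V$. Writing $G_N = \bigcap_{m \leq N} F_m \in \B^0_V$, the sets $G_N \cap E$ decrease to $F$, so the images $\pi_J(G_N \cap E)$ and $\pi_I(G_N \cap E)$ decrease to sets containing $\pi_J(F)$ and $\pi_I(F)$ respectively. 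The delicate point is that projection does not commute with countable intersection in general — $\pi_J(\bigcap_N (G_N \cap E))$ may be strictly smaller than $\bigcap_N \pi_J(G_N \cap E)$ — so one must argue that the measures still converge correctly. Here I expect to use condition (b): since the fibres of $\pi_I$ over $E$ are finite (at most $l$ points), the projection behaves well enough that $\nu^I(\bigcap_N \pi_I(G_N \cap E)) = \nu^I(\pi_I(F))$, and similarly for $J$. Granting this, applying (c) to each $G_N$ and passing to the limit gives $\nu^J(\pi_J(F)) \leq k \nu^I(\pi_I(F))$.

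For part (2), I would apply part (1) to the set $F \setminus \pi_I^{-1}(C)$, which is again a countable intersection of sets in $\B^0_V$ with $E$ (since $\pi_I^{-1}(C) \in \B^0_V$, so its complement is too, and intersecting with the existing $F$ preserves the countable-intersection-with-$E$ form). The point is to bound $\nu^J(\pi_J(F))$ above by splitting $F$ according to whether $\pi_I$ lands in $C$ or not. Concretely, $\pi_J(F) \subseteq \pi_J(F \setminus \pi_I^{-1}(C)) \cup \pi_J(F \cap \pi_I^{-1}(C))$, so by subadditivity
\[
\nu^J(\pi_J(F)) \leq \nu^J(\pi_J(F \setminus \pi_I^{-1}(C))) + \nu^J(\pi_J(F \cap \pi_I^{-1}(C))).
\]
Applying part (1) to $F \cap \pi_I^{-1}(C)$ bounds the second term by $k \nu^I(\pi_I(F \cap \pi_I^{-1}(C))) \leq k \nu^I(C \cap \pi_I(F))$, and rearranging yields the desired inequality.

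Part (3) should follow from part (2) by taking complements, setting $C = I \setminus B$ (more precisely the complement of $B$ inside $M^I$), so that $F \setminus \pi_I^{-1}(C) = F \cap \pi_I^{-1}(B)$ and $C \cap \pi_I(F) = \pi_I(F) \setminus B$; substituting into part (2) gives the statement of part (3) directly. I anticipate the main obstacle to be part (1), specifically justifying the passage to the limit through the projection maps: the combination of hypotheses (b) (finite fibres over $E$) and the Definability/Fubini measurability framework is what rescues the argument, since without finite fibres the projected images need not be measurable nor have converging measures. I would handle this by exploiting finiteness of the fibres to write $\nu^I(\pi_I(G_N \cap E))$ in terms of an integral (via the Definability condition) of the counting function of the fibres, and then invoke dominated convergence — the bound $l$ on fibre size provides the integrable dominating function.
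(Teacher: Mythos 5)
Your proof is correct and follows the same skeleton as the paper's: truncate the countable intersection to finite stages $G_N \cap E$, apply hypothesis (c) at each stage, and pass to the limit using continuity from above of the (finite) measures; parts (2) and (3) are then identical to the paper's (the splitting $\pi_J(F) = \pi_J(F\cap\pi_I^{-1}(C)) \cup \pi_J(F\setminus\pi_I^{-1}(C))$ plus subadditivity, and complementation $C = M^I\setminus B$). The one genuine divergence is in how you justify the delicate step in (1), the interchange of projection with the decreasing intersection on the $I$-side: the paper invokes $\aleph_1$-saturation to get $\pi_J(F) = \bigcap_i \pi_J(E\cap F_i)$, whereas you use condition (b). Your route is sound and arguably more robust: since the fibres $\pi_I^{-1}(\a_I)\cap E$ have at most $l$ points, the sets $(G_N\cap E)(\a_I)$ form a decreasing chain of nonempty subsets of a finite set, so their intersection is nonempty and one gets $\bigcap_N \pi_I(G_N\cap E) = \pi_I(F)$ exactly, with no saturation needed -- indeed, since $E$ is merely Borel (an element of $\B_V$, not necessarily type-definable), the finite-fibre argument is what makes the interchange work in full generality, and the paper's appeal to saturation is best read as shorthand for the same point. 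Note also that the interchange is only needed on the $I$-side: on the $J$-side the trivial inclusion $\pi_J(F)\subseteq\pi_J(G_N\cap E)$ already gives the inequality in the right direction. Your proposed fallback of expressing $\nu^I(\pi_I(G_N\cap E))$ as an integral of fibre-counting functions and invoking dominated convergence is unnecessary machinery once the set-theoretic identity is in hand, and your passing remark in (2) about applying part (1) to $F\setminus\pi_I^{-1}(C)$ is a slip -- the set you actually (and correctly) feed into part (1) in the displayed argument is $F\cap\pi_I^{-1}(C)$, for which $\pi_I(F\cap\pi_I^{-1}(C)) = C\cap\pi_I(F)$.
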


\begin{proof} (1) Write $F = E \cap \bigcap_{i<\omega} F_i$ where each $F_i$ is in $\B^0_V$. We can assume that $F_i \supseteq F_{i+1}$. Then $\aleph_1$-saturation implies $\pi_J(F) = \bigcap_{i<\omega} \pi_J(E \cap F_i)$ and $\nu^J(\pi_J(F)) = \inf(\nu^J(\pi_J(E \cap F_i)): i<\omega)$.  By assumption on $E$, we have $\nu^J(\pi_J(E \cap F_i)) \leq k\nu^I(\pi_I(E \cap F_i))$ for each $i$; taking the limit gives what we require.

(2) By (1) we have
\[\nu^J(\pi_J(\pi_I^{-1}(C)\cap F)) \leq k\nu^I(\pi_I(F)\cap C).\]
Of course, $\pi_J(F) = \pi_J(\pi_I^{-1}(C)\cap F) \cup \pi_J(F\setminus\pi_I^{-1}(C))$, so 
\[\nu^J(\pi_J(F)) \leq \nu^J(\pi_J(\pi_I^{-1}(C)\cap F))+ \nu^J(\pi_J(F\setminus\pi_I^{-1}(C))).\]
Putting these together gives the required result.

(3) Apply (2), taking $C$ to be the complement of $B$.
\end{proof}

\smallskip

\begin{lemma} \label{lemma2} Suppose $E \in \B_V$ with $\nu^J(\pi_J(E)) >0$ and, for all $\a \in E$, we have that  $\pi_J^{-1}(\pi_J(\a)) \cap E$ has at most $l$ elements.  Let $X \subseteq M$ be a countable set over which $E$ is definable.
\begin{enumerate}
\item[(1)] There is some $F \in \B_V(X)$ with $F \subseteq E$, a natural number $r$ and an $L(X)$-formula $\psi(\x)$ such that $\nu^J(\pi_J(F)) > 0$, and if $\a \in F$, then $\psi(\a_J, x_n)$ isolates $\tp^\M(a_n/\a_J X)$, and this type has precisely $r$ solutions in $\M$. The set $F$ can be taken to be a countable intersection of sets in $\B_V^0(X)$ with $E$.
\item[(2)] If $X$ is chosen so that $r$ in (1) is minimal, then for countable $Y \supseteq X$ and for almost all $\a_J \in \pi_J(F)$, if $(\a_J,a_n) \in F$, then $\psi(\a_J,x_n)$ isolates $\tp^\M(a_n/ \a_JY)$ (and therefore this type has the same solutions as $\tp^\M(a_n/\a_JX)$).
\end{enumerate}
\end{lemma}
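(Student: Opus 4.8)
The plan is to prove (1) by partitioning $E$ according to which $L(X)$-formula isolates the (necessarily algebraic) type of the last coordinate over the rest, and to prove (2) by a minimality argument: enlarging the parameter set can only refine these types, and if it did so on a set of positive measure we could re-run the decomposition of (1) over the larger set to produce a smaller value of $r$.

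For (1), I would first note that since $E$ is definable over $X$ and each $J$-fibre $\pi_J^{-1}(\pi_J(\a))\cap E$ has at most $l$ elements, for every $\a=(\a_J,a_n)\in E$ the type $p_\a:=\tp^\M(a_n/\a_J X)$ is algebraic; the standard argument (take a formula in $p_\a$ with the fewest solutions) shows $p_\a$ is isolated by some $L(X)$-formula. For each $L(X)$-formula $\psi(\x_J,x_n)$ and each $r\in\N$ I then set
\[
E_{\psi,r}=\{\a\in E : \models\psi(\a_J,a_n),\ \psi(\a_J,x_n)\text{ isolates }p_\a,\ \psi(\a_J,x_n)\text{ has exactly }r\text{ solutions}\}.
\]
The condition ``$\psi(\a_J,x_n)$ isolates a complete type over $\a_J X$'' is the countable conjunction, over all $L(X)$-formulas $\theta$, of the $\B^0_J(X)$-conditions $\forall x_n(\psi\to\theta)\vee\forall x_n(\psi\to\neg\theta)$; together with $\models\psi(\a_J,a_n)$ and the $\B^0_J(X)$-condition $\exists^{=r}x_n\,\psi(\a_J,x_n)$, this exhibits $E_{\psi,r}$ as a countable intersection of sets in $\B^0_V(X)$ with $E$ (pulling the $J$-conditions back along $\pi_J$). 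Since every $\a\in E$ lies in some $E_{\psi,r}$, we get $E=\bigcup_{\psi,r}E_{\psi,r}$, a countable union. As in the proof of Lemma \ref{lem}, $\aleph_1$-saturation makes each $\pi_J(E_{\psi,r})$ measurable, so from $\nu^J(\pi_J(E))>0$ and countable subadditivity some $\nu^J(\pi_J(E_{\psi,r}))>0$; taking $F=E_{\psi,r}$ gives (1).

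For (2), fix $X$ so that $r$ is minimal among all instances of (1), and let $Y\supseteq X$ be countable. The crucial observation is that for $(\a_J,a_n)\in F$ the realisations of $\tp^\M(a_n/\a_J Y)$ form a subset of the $r$-element solution set of $\psi(\a_J,x_n)$; hence $\psi(\a_J,x_n)$ isolates $\tp^\M(a_n/\a_J Y)$ if and only if this $Y$-type still has exactly $r$ solutions. So it suffices to show the ``bad'' set
\[
F''=\{(\a_J,a_n)\in F : \tp^\M(a_n/\a_J Y)\text{ has fewer than }r\text{ solutions}\}
\]
satisfies $\nu^J(\pi_J(F''))=0$. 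I would write ``fewer than $r$ solutions'' as the countable union, over $L(Y)$-formulas $\theta$, of the $\B^0_V(Y)$-conditions $\models\theta(\a_J,a_n)\wedge\exists^{<r}x_n(\psi(\a_J,x_n)\wedge\theta(\a_J,x_n))$, so that $\pi_J(F'')$ is measurable. Suppose it had positive measure. Running the decomposition of (1) over $Y$ yields $E=\bigcup_{\psi',r'}E^Y_{\psi',r'}$, and every bad pair lies in some $E^Y_{\psi',r'}$ with $r'<r$ (its $Y$-type is algebraic, hence isolated by an $L(Y)$-formula with $r'<r$ solutions). Thus $\pi_J(F'')\subseteq\bigcup_{r'<r}\bigcup_{\psi'}\pi_J(E^Y_{\psi',r'})$, so by monotonicity and subadditivity some $\nu^J(\pi_J(E^Y_{\psi',r'}))>0$ with $r'<r$, contradicting the minimality of $r$. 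Hence $\nu^J(\pi_J(F''))=0$, which gives (2).

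The main obstacle is the measurability and definability bookkeeping in (2): one has to express the event ``the type refines over $Y$'' as a Borel set (an $F_\sigma$ over $E$) so that its $J$-projection is measurable, and one must check that the decomposition carried out over $Y$ really produces data of the same kind as in (1), so that a positive-measure piece with $r'<r$ is a genuine violation of minimality. The measure-theoretic inputs themselves — monotonicity, countable subadditivity, and measurability of projections via $\aleph_1$-saturation — are all routine given Lemma \ref{lem}.
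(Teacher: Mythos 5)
Your proposal is correct and takes essentially the same route as the paper: part (1) uses the identical decomposition $E=\bigcup_{\psi,r}E_{\psi,r}$ into pieces on which an $L(X)$-formula isolates the (algebraic, by the fibre bound and saturation) type of $a_n$ over $\a_JX$ with exactly $r$ solutions, and then picks a piece whose $J$-projection has positive measure. Your part (2) is the paper's minimality argument in only slightly different packaging: the paper applies (1) directly (over $Y$) to the bad set $E'$ --- which coincides with your $F''$, via your observation that failure of isolation over $Y$ is exactly the $Y$-type having fewer than $r$ solutions --- to extract a positive-measure piece with $r'<r$, whereas you re-run the decomposition of (1) over $Y$ on all of $E$ and invoke subadditivity, which amounts to the same step.
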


\begin{proof} (1) For each $V$-variable formula $\psi(\x) \in L(X)$, and $r \leq l$ consider the set $E_{\psi, r}$ consisting of those $(a_1,\ldots, a_n) \in E$ such that the formula $\psi(a_1,\ldots, a_{n-1}, x_n)$ isolates $\tp(a_n/a_1,\ldots, a_{n-1}, X)$, and this type has $r$ solutions in $\M$. As $E$ is defined over $X$, all of these solutions lie in $E$. Note that $E_{\psi, r}$ is defined by the conjunction of $E$ and:

\begin{multline*}
\bigwedge_{\chi\in L(X)} \psi(x_1,\ldots, x_n) \wedge (\exists^{=r} x_n)\psi(x_1,\ldots, x_n) \wedge \\ (\forall y)(\psi(x_1,\ldots, x_{n-1}, y) \to (\chi(x_1,\ldots, x_n) \leftrightarrow \chi(x_1,\ldots, x_{n-1}, y))),\end{multline*}
so is in $\B_V(X)$. Moreover, $\bigcup_{\psi;\, r\leq l} E_{\psi,r} = E$ (by the algebraicity). So as this is a countable union, there are $\psi$ and $r \leq l$ with $\nu^{J}(\pi_J(E_{\psi,r})) >0$. Then $F = E_{\psi, r}$ has the required properties.

(2) Let $Y \supseteq X$ be a countable subset of $M$ and consider 
\[E' = \{ \a \in F : \psi(\a_J,x_n) \mbox{ does not isolate } \tp(a_n/\a_JY)\}.\]
As in (1), we have $E' \in \B_V(Y)$. Suppose for a contradiction that $\nu^J(\pi_J(E')) > 0$. Applying (1) we obtain $F' \in \B_V(Y)$ with $F' \subseteq E'$ and $\nu^J(\pi_J(F')) > 0$, some $r' \in \N$ and an $L(Y)$-formula $\psi'$ such that for all $\a \in F'$, $\psi'(\a_J, x_n)$ isolates $\tp(a_n/\a_JY)$ and the latter has $r'$ solutions. By definition of $E'$ we have $r' < r$ and this contradicts the choice of $r$. Thus $\nu^J(\pi_J(E')) = 0$ and the result follows.
\end{proof}

We now prove  Theorem \ref{main}.

\begin{proof}[Proof of \ref{main}]

From Lemma \ref{lemma2} (2), there is a countable subset  $X $ of $M$ containing  the parameters for $E$ and a countable intersection $F$ of $X$-definable sets with $E$ such that \begin{itemize}
\item $\nu^{J}(\pi_J(F)) > 0$; 
\item if  $(a_1,\ldots, a_{n-1}, a_n)$, $(a_1,\ldots, a_{n-1}, a_n') \in F$, then $\tp^\M(a_n/ a_1,\ldots, a_{n-1}, X) = \tp^\M(a_n'/ a_1,\ldots, a_{n-1}, X)$;
\item if $Y \supseteq X$ is countable, then for almost all $\a \in F$, $\tp^\M(a_n/\a_JX)$ and $\tp^\M(a_n/\a_JY)$ have the same solutions.
\end{itemize}

 To see the second point here, note that the two types are isolated by the same formula, so must be equal. The other points are directly from Lemma \ref{lemma2}.
\smallskip

For $I \in \J$, let $A_I = \pi_I^{-1}(\pi_I(F))$. So of course, $A_I \in \B_{V,I}$
 and  $F \subseteq \bigcap_{I \in \J} A_I$. We verify that the hypotheses of Theorem \ref{HRL} hold.
 
 Let $\delta > 0$ (to be fixed later) and $B_I \in \B^0_{V,I}$ with $\nu^{V}(A_I\setminus B_I) < \delta$.  Note that  $\nu^V(A_I) = \nu^{I}(\pi_I(F))$ and similarly $\nu^{I}(\pi_I(F)\setminus \pi_I(B_I)) = \nu^V(A_I\setminus B_I)$. Therefore, with $k$ as in condition (c) of Theorem \ref{main} and $I \neq J$, Lemma \ref{lem} (3) gives:
\begin{multline*} \nu^{J}(\pi_J(F\cap B_I)) \geq \nu^{J}(\pi_J(F)) - k\nu^{I}(\pi_I(F)\setminus \pi_I(B_I)) > \nu^{J}(\pi_J(F)) - k\delta.\end{multline*}
This also holds with $I = J$, as $k \geq 1$. 

Now let $\eta = \nu^{J}(\pi_J(F))$ (so $\eta>0$, by choice of $F$) and  $\delta = \eta/2kn$. We obtain, for all $I \in \J$:
\[\nu^{J}(\pi_J(F\cap B_I)) \geq (1 -1/2n)\eta.\]

The  measure of the union of the complements of the sets $\pi_J(F \cap B_I)$ in $\pi_J(F)$ is  therefore at most $\eta/2$, and so
\[\nu^{J}(\bigcap_{I \in \J}\pi_J(F\cap B_I)) \geq \eta/2.\]
Let $Y$ be the union of $X$ and the parameter sets of the $B_I$. Then  we can find $\b_J = (b_1,\ldots, b_{n-1}) \in \bigcap_I \pi_J(F\cap B_I)$ such that if $(\b_J, b_n)$ and $(\b_J, b_n') \in F$, then they have the same type over $Y$. Indeed, almost all $\b_J \in \pi_J(F)$ have this property, by our conditions on $F$. 

Take $b_n \in M$ with $\b=(b_1,\ldots, b_{n-1}, b_n) \in F$. We show that  $(b_1,\ldots, b_n) \in \bigcap_I B_I$, and thus the hypotheses of Theorem \ref{HRL} hold.

\smallskip

Clearly $\b \in B_J$. Take $I \neq J$. As $(b_1,\ldots, b_{n-1}) \in \pi_J(F\cap B_I)$ there is $(b_1', b_2',\ldots, b_n') \in F \cap B_I$ with $(b_1',\ldots, b_{n-1}') = (b_1,\ldots, b_{n-1})$. So $(b_1,\ldots, b_{n-1}, b_n), (b_1,\ldots, b_{n-1}, b_n') \in F$, and therefore $b_n$ and $b_n'$ have the same type over $Y \cup\{b_1, \ldots, b_{n-1}\}$. As $B_I$ is defined over $Y$ and $(b_1,\ldots, b_{n-1}, b_n') \in B_I$, it follows that  $(b_1,\ldots, b_{n-1}, b_n) \in B_I$, as required.

We have shown that $\bigcap_I B_I \neq \emptyset$, so Theorem \ref{HRL} applies to give that $\nu(\bigcap_{I} A_I) > 0$. As $\bigcap_{I} A_I \subseteq \{ \b \in \M^V : \pi_I(\b) \in \pi_I(E) \mbox{ for all } I \in \J\}$, we have the result.
\end{proof}

\section{Examples and applications}

\subsection{Pseudofinite structures and Szemer\'edi's Therorem}\label{Szem} In \cite{T} (and Section 5 of \cite{T2}), the structure $\M$ is an ultraproduct of finite structures $(F_i : i < \omega)$ and the measures arise by taking the standard part of ultraproducts of normalised counting measures on the $F_i$. The original language is enriched to ensure Definability of the measure. The Fubini property then follows as we are dealing with counting measures.  

In Section 2 of \cite{T}, Szemer\'edi's Theorem is deduced from Theorem \ref{HRL} in the following way (we do not give the details: the point is to explain where the statement of Theorem \ref{main} comes from).  The original language is that of abelian groups (written additively) and there is a predicate $A(.)$ for a subset of the group. Each $F_i$ is cyclic of prime order (increasing with $i$) and $A[F_i]$ is some subset of $F_i$. Denoting the ultraproduct (in the enriched language) by $G$, the main assumption is that the measure of $A[G]$ is strictly positive.

So $(G, +)$ is a torsion-free, divisible abelian group and, if $n \in \N$ and $n \geq 3$, we have a definable measure $\nu^n$ on the definable subsets of $G^n$ which satisfies the hypotheses of Theorem \ref{main}. The measure is invariant under definable bijections (in particular, under translations and taking $i$-th roots).   Let 
\[E = \{ (x_1,\ldots, x_{n-1}, \sum_{i < n} x_i) :  \sum_{i < n} ix_i \in A\}.\]

This is definable and, in the notation of Theorem \ref{main}, $\pi_J(E) = \nu^1(A) > 0$ (using the divisibility of $G$ and invariance of the measure under definable bijections). The projection maps $\pi_I$ (with $\vert I \vert = n-1$) are injective on $E$ and thus the remaining two conditions in Theorem \ref{main} hold (with $k = l = 1$). 

By Theorem \ref{main}, there is therefore some $\b = (b_1,\ldots, b_n) \in G^n$ such that $\pi_I(\b) \in \pi_I(E)$ for all $I$ of size $n-1$ and, by positivity of the measure, we can take $d = b_n - \sum_{i < n}b_j$ to be non-zero. The definition of $E$ means that if we set $a = \sum_{i < n}ib_i$, then $a, a+d, \ldots, a + (n-1)d \in A$. So as $d \neq 0$, we have an $n$-term arithmetic progression in $A$.

\subsection{An amalgamation result in MS-measurable structures}\label{MSsection}

The notion of a \textit{measurable structure} is introduced in the paper \cite{MS} by Macpherson and Steinhorn, following on from observations of Chatzidakis, van den Dries and Macintyre in \cite{CDM}. A survey by Elwes and Macpherson of results and open questions is given in \cite{EM}. Following \cite{KP}, we refer to this notion as \textit{MS-measurability}.

We  recall the definition of MS-measurability from (\cite{MS}; Definition 5.1). For a (first-order) $L$-structure $\M$ we denote by $\Def(\M)$ the collection of all non-empty parameter definable subsets of $M^n$ (for all $n \geq 1$).

\begin{definition}\label{MSmeas} \rm A  structure $\M$ is \textit{MS-measurable} if there is a \textit{dimension - measure} function $h : \Def(\M) \to \N \times \R^{>0}$ satisfying the following, where we write $h(X) = (\dim(X), \m(X))$:
\begin{enumerate}
\item[(i)] If $X$ is finite (and non-empty) then $h(X) = (0, \vert X \vert)$;
\item[(ii)] For every formula $\phi(\x, \y)$ there is a finite set $D_\phi \subseteq \N \times \R^{>0}$ of possible vaules for  $h(\phi(\x, \a))$ (with $\a \in M^n$) and for each such value, the set of $\a$ giving this value is $0$-definable;
\item[(iii)] (Fubini property) Suppose $X, Y \in \Def(\M)$ and $f : X \to Y$ is a definable surjection. By (ii), $Y$ can be partitioned into disjoint definable sets $Y_1,\ldots, Y_r$ such that  $h(f^{-1}(y))$ is constant, equal to $(d_i, m_i)$, for $y \in Y_i$. Let $h(Y_i) = (e_i, n_i)$.  Let $c$ be the maximum of $d_i+e_i$ and suppose this is attained for $i = 1,\ldots, s$. Then $h(X) = (c, m_1n_1+\cdots+m_s n_s)$. 
\end{enumerate}
\end{definition}

In the above, $\dim(X)$ is the \textit{dimension} and $\m(X)$ the \textit{measure} of $X$.  Clearly we can normalise and assume   that $\mu(M) = 1$. We also extend the definition so that $\mu(\emptyset) = 0$. Note that MS-measurability is  a property of the theory of $M$, so any elementary extension or submodel of $\M$ is MS-measurable if $\M$ is. As observed in (\cite{MS}; Remark 5.2), measurability implies supersimplicity and dimension dominates $D$-rank, but is not necessarily equal to it. By (\cite{MS}, Proposition 5.10), the dimension - measure function extends to definable subsets of $\M^{eq}$. 

We suppose (for convenience) that $L$ is countable and suppose that $\M$ is an $\aleph_1$-saturated MS-measurable structure with dimension-measure function $h = (\dim, \m)$. Let $S \in \Def(\M)$ be infinite and let $\B^0_S$ denote the set of definable subsets of $S$. For  $D \in \B^0_S$ we define:
\[\nu^S(D) = \left\{ \begin{array}{ll} \mu(D)/\mu(S) & \mbox{ if $\dim(D) = \dim(S)$}\\
                                                            0       & \mbox{ otherwise}\end{array}\right. .\]

If $X_1, X_2 \in \B^0_S$ are disjoint, then (iii) of Definition \ref{MSmeas} (with $Y$ a two-point set) shows that $\nu^S(X_1\cup X_2) = \nu^S(X_1)+\nu^S(X_2)$. So $\nu^S$ is a finitely-additive probability measure on $\B^0_S$ and it therefore extends uniquely to a probability measure on $\B_S$, which we will also denote by $\nu^S$. 

Now suppose that $S_1,\ldots, S_n \in \Def(\M)$ are infinite  and $S = S_1 \times \ldots \times S_n$. If $I \subseteq V = \{1,\ldots, n\}$, let $S_I$ be the product of the $S_i$ for $i \in I$. As previously, $\pi_I : S \to S_I$ is the projection map. By considering this, (iii) in Definition \ref{MSmeas} gives that $\dim(S) = \dim(S_I) + \dim(S_{V\setminus I})$ and $\mu(S) = \mu(S_I)\mu(S_{V\setminus I})$. 

Let $\nu = \nu^V = 
\nu^S$. If $I \subseteq V$, then the push-forward measure $\nu^I$ on $\B_{S_I}$ obtained from $\nu$ and $\pi_I$ is equal to $\nu^{S_I}$, as defined above. Indeed, it suffices to check this for $D \in \B^0_{S_I}$. If $\dim(D) = \dim(S_I)$, then 
\[\nu^I(D) = \nu^V(D \times S_{V\setminus I}) =  \mu(D \times S_{V\setminus I})/ \mu(S) = \mu(D)\mu(S_{V\setminus I})/\mu(S) = \mu(D) / \mu(S_I)\]
and this is equal to  $\nu^{S_I}(D)$. If $\dim(D) < \dim(S_I)$, then $\dim(D \times S_{V\setminus I}) < \dim(S)$, so both $\nu^I(D)$ and $\nu^{S_I}(D)$ are zero. 

The Definability and Fubini properties given in Section \ref{setup} hold for the $\nu^I$, using (ii) and (iii) of Definition \ref{MSmeas} (cf. Remarks \ref{Rem22}).

From Theorem  \ref{main} we obtain the following, which can be seen as a weak form of independent $n$-amalgamation:

\begin{corollary}\label{23} Suppose $\M$ is an MS-measurable structure and $S_1,\ldots, S_n \in \Def(\M)$ are infinite and defined over a finite set $C \subset M$. Let $S = S_1 \times \ldots \times S_n$ and suppose  $E \subseteq S$ is a $C$-definable subset such that:
\begin{enumerate}
\item[(a)] $\dim(\pi_I(E)) = \sum_{i \in I} \dim(S_i)$ for all $I \in [n]^{n-1}$, and
\item[(b)] If $(b_1,\ldots, b_n) \in E$, then $b_i \in \acl(C \cup\{ b_j : j \neq i\})$. 
\end{enumerate}
 Then 
\[\dim(\{\b \in S : \pi_I(\b)\in \pi_I(E) \mbox{ for all } I \in [n]^{n-1}\}) = \dim(S).\]
\end{corollary}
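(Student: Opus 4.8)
The plan is to deduce Corollary \ref{23} directly from Theorem \ref{main} by verifying its three hypotheses (a), (b), (c) for the measure $\nu = \nu^S$ constructed from the dimension-measure function $h = (\dim, \m)$ in the preceding discussion. Recall that the push-forward measures $\nu^I$ coincide with $\nu^{S_I}$, and that both the Definability and Fubini conditions have already been checked to hold for these measures using (ii) and (iii) of Definition \ref{MSmeas}. So the work is entirely in matching the rank-theoretic hypotheses of the Corollary to the measure-theoretic hypotheses of Theorem \ref{main}, and then translating the measure-positivity conclusion back into a dimension statement.

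First I would verify hypothesis (a) of Theorem \ref{main}. The Corollary's assumption (a) says $\dim(\pi_J(E)) = \sum_{i \in J}\dim(S_i) = \dim(S_J)$ for the distinguished $(n-1)$-set $J$. Since $\pi_J(E) \subseteq S_J$ has full dimension, by definition of $\nu^J = \nu^{S_J}$ we get $\nu^J(\pi_J(E)) = \mu(\pi_J(E))/\mu(S_J) > 0$, which is exactly hypothesis (a) of Theorem \ref{main}. Next, hypothesis (b) of the Corollary, namely $b_i \in \acl(C \cup \{b_j : j \neq i\})$ for each $(b_1,\ldots,b_n) \in E$, gives precisely finiteness of fibres: for any $I \in \J$ and $\a \in M^I$, the fibre $\pi_I^{-1}(\a)\cap E$ pins down the missing coordinate algebraically, hence is finite. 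By $\aleph_1$-saturation and the definability clause (ii) the fibre size is uniformly bounded by some $l \in \N$, which is hypothesis (b) of Theorem \ref{main}.

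The main obstacle is hypothesis (c) of Theorem \ref{main}: producing a constant $k$ with $\nu^J(\pi_J(F\cap E)) \leq k\,\nu^I(\pi_I(F\cap E))$ for all $F \in \B^0_V$ and all $I \in \J$. The natural strategy is to invoke Remarks \ref{RN}, which reduce this to exhibiting a measure $\lambda$ on definable subsets of $E$, with $\lambda(E)>0$, sandwiched as $r\,\nu^J(\pi_J(F\cap E)) \leq \lambda(F\cap E) \leq s\,\nu^I(\pi_I(F\cap E))$. Here I would use the dimension-measure function on $\M^{eq}$ to define $\lambda$ as the restriction of the measure on $E$ (as a definable set in its own right). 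The key point is that since the fibres of $\pi_I$ over $\pi_I(E)$ are finite (dimension $0$) and, by assumption (a), $\dim(\pi_I(E)) = \dim(S_I) = \dim(E)$, the Fubini property (iii) of Definition \ref{MSmeas} relates $\mu(F\cap E)$ to $\mu(\pi_I(F\cap E))$ up to the bounded finite fibre contribution. This is where I expect the real care to be needed: one must use clause (ii) to partition $\pi_I(E)$ into finitely many pieces on which the fibre has constant dimension-measure $(0, m_i)$, apply (iii) on each piece, and bound the resulting multiplicities to obtain uniform constants $r, s$ independent of $F$.

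Finally, with all three hypotheses of Theorem \ref{main} verified, the conclusion gives
\[
\nu^V\bigl(\{\b \in M^V : \pi_I(\b) \in \pi_I(E) \text{ for all } I \in \J\}\bigr) > 0.
\]
It remains to translate this back: the set in question is $C$-definable (as a finite intersection of $C$-definable sets $\pi_I^{-1}(\pi_I(E))$), and by the very definition of $\nu^V = \nu^S$, a definable subset of $S$ has positive $\nu^S$-measure if and only if it has dimension equal to $\dim(S)$. Hence positivity of the measure yields exactly $\dim(\{\b \in S : \pi_I(\b) \in \pi_I(E) \text{ for all } I \in [n]^{n-1}\}) = \dim(S)$, which is the assertion of the Corollary. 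I would note in passing that the quantification over all $I \in [n]^{n-1}$ rather than just those containing the distinguished index is harmless, since by symmetry the roles of the $S_i$ are interchangeable and hypotheses (a), (b) are symmetric in $I$.
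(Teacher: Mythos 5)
Your proposal is correct and takes essentially the same route as the paper: verify hypotheses (a)--(c) of Theorem \ref{main} for $\nu = \nu^S$, with the uniform bound $l$ from compactness and condition (c) coming from the finite-fibre estimate $\mu(\pi_I(X)) \leq \mu(X) \leq l\,\mu(\pi_I(X))$ via clauses (i) and (iii) of Definition \ref{MSmeas} --- your detour through Remarks \ref{RN} is merely a repackaging of the paper's direct verification of (c). One caution on that detour: the auxiliary measure $\lambda$ must be taken as $\nu^E$ (i.e.\ vanishing on definable subsets of dimension strictly less than $\dim(E)$), not the raw restriction of $\mu$ to $E$, since otherwise the upper bound $\lambda(F\cap E) \leq s\,\nu^I(\pi_I(F\cap E))$ fails whenever $F\cap E$ is non-empty of lower dimension (the left side positive, the right side zero).
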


\begin{remarks}\rm  Assumptions (a) and (b)  in Corollary \ref{23} imply that the $S_i$ have the same dimension. Indeed, $\sum_{j < n}\dim(S_j) = \dim(\pi_J(E)) = \dim(E) = \dim(\pi_I(E)) = \sum_{i \in I} \dim(S_i)$ for all $I \in [n]^{n-1}$. So $\dim(S_j) = \dim(S_n)$ for all $j < n$. 
\end{remarks}

We now prove Corollary \ref{23}.
                                                          
\begin{proof}
We may assume that $\M$ is $\aleph_1$-saturated. We check that the three conditions of Theorem \ref{main} hold. 

By (a), $\dim(\pi_J(E)) = \dim(S_J)$, so  $\nu^J(\pi_J(E)) = \mu(\pi_J(E))/\mu(S_J) >0$. 

As $E$ is definable, by compactness we have a uniform bound $l$ on the algebraicity in Assumption (b). This  gives the second condition required by Theorem \ref{main}. 

Suppose $I \in [n]^{n-1}$. The restriction of the projection map $E \to \pi_I(E)$ has finite fibres, of size at most $l$. Suppose $X \subseteq E$ is definable. If we decompose $\pi_I(X)$ according to the size of the fibres $X \to \pi_I(X)$ and apply (i) and (iii) of \ref{MSmeas}, we obtain 
\[ \mu(\pi_I(X)) \leq \mu(X) \leq l \mu(\pi_I(X)).\]
Thus 
\[ \mu(\pi_J(X)) \leq \mu(X) \leq l \mu(\pi_I(X)).\]
If $\dim(X) = \dim(E)$, then $\dim(\pi_I(X)) = \dim(\pi_I(E)) = \dim(S_I)$ (by (a)) and we obtain
\[\nu^J(\pi_J(X)) \leq l\frac{\mu(S_I)}{\mu(S_J)}\nu^I(\pi_I(X)).\]
If $\dim(X) < \dim(E)$ then the inequality  is also true, as both sides are zero. So we have the third condition required by Theorem \ref{main}.

So by Theorem  \ref{main} 
\[\nu^V(\{\b \in S : \pi_I(\b)\in \pi_I(E) \mbox{ for all } I \in [n]^{n-1}\}) >0\]
and the conclusion follows. 
\end{proof}

\subsection{Further examples}\label{NIP}  If $Th(\M)$ is NIP, then \textit{generically stable} measures (see \cite{HPS}, or \cite{Simon}) provide examples of measures satisfying the Definability and Fubini conditions. More precisely, suppose $\nu_{x_1},\ldots, \nu_{x_n}$ are generically stable measures for $\M$ (in the indicated variables) and let $\nu^V = \nu_{x_1}\otimes \cdots  \otimes\nu_{x_n}$. Then $\nu^V$ has the Definability and Fubini properties, and so Theorem \ref{HRL} and Theorem \ref{main} hold. It would be interesting to know whether either of these results  is saying something new, or at least non-trivial, in this context.

\section{MS-measurability and the Hrushovski construction}\label{App}

In (\cite{EM}, Definition 3.13), a complete theory is defined to be \textit{unimodular} if in any model $\M$, whenever $f_i : X \to Y$ are definable $k_i$-to-1 surjections in $\M^{eq}$ (for $i = 1,2$), then $k_1 = k_2$. (See \cite{KP} for comments on this; in particular why it should more properly be termed weak unimodularlity.) An  MS-measurable structure is necessarily superstable of finite $SU$-rank  and unimodular, and  Question 7 of \cite{EM} asks whether the converse holds. Unimodularity is implied by $\omega$-categoricity (\cite{EM}, Proposition 3.16), and in a similar vein, Question 2 of \cite{EM} asks whether a MS-measurable $\omega$-categorical structure is necessarily one-based. For both of these questions the key examples to be considered are Hrushovski's non locally modular supersimple $\omega$-categorical structures from \cite{Hr97} (and \cite{Hrpp}). In this section we apply Corollary \ref{23}  to show that \textit{some of} Hrushovski's examples are \textit{not} MS-measurable.  In particular, this answers Question 7 of \cite{EM}: there is a supersimple, finite rank unimodular theory (even, $\omega$-categorical, $SU$-rank 1) which is not MS-measurable.

\subsection{The Hrushovski construction for $\omega$-categorical structures} 

We recall briefly some details of the construction method. The original version of this is in \cite{Hrpp}, where it is used to provide a counterexample to Lachlan's conjecture, and \cite{Hr97}, where it is used to construct a non-modular, supersimple $\aleph_0$-categorical structure. The book \cite{W} is a very convenient reference for this (see Section 6.2.1). Generalisations and reworkings of the method (particularly relating to simple theories) are also to be found in \cite{E:predim}. We will restrict to the simplest form of the construction appropriate for producing $\omega$-categorical structures of $SU$-rank 1.

\def\KB{\overline{\mathcal{K}}}
\def\K{\mathcal{K}}

We work with a finite relational language $L = \{R_i: i \leq m\}$. For later use, it will be convenient to assume that this contains some $3$-ary relation $R$.   Recall that if $B, C$ are $L$-structures with a common substructure $A$ then the \textit{free amalgam} $B\coprod_A C$ of $B$ and $C$ over $A$ is the $L$-structure whose domain is the disjoint union of $B$ and $C$ over $A$ and whose atomic relations are precisely those of $B$ together with those of $C$. Let $\KB$ be the class of $L$-structures  and  denote by $\K$ the finite structures in $\KB$. 

For $A \in \K$ define the \textit{predmension} $\delta(A) = \vert A \vert - \sum_i\vert R_i[A]\vert$.  If $A \subseteq B \in \K$ write $A \leq B$ to mean $\delta(A) < \delta(B')$ for all $A \subset B' \subseteq B$. (We sometimes say that $A$ is \textit{self-sufficient} in $B$.) For structures in $\K$, one has:

$\bullet$ If $X \subseteq B$ and $A \leq B$, then $X\cap A \leq X$;\newline
$\bullet$ If $A \leq B \leq C$, then $A \leq C$.

Consequently (cf. \cite{W}, Corollary 6.2.8), for each $B \in \K$ there is a closure operation given by 
$\cl_B(X) = \bigcap \{ A : X \subseteq A \leq B\} \leq B$ for $X \subseteq B$. Of course, if $B \leq C \in \K$ and $X \subseteq B$, then $\cl_B(X) = \cl_C(X)$. 

The relation $\leq$ can be extended to infinite structures so that  the above  properties still hold: if $M \in \KB$ and $A \subseteq M$, write $A \leq M$ to mean that $A\cap X \leq X$ for all finite $X \subseteq M$. 

If $A, B \in \KB$, an embedding $\alpha : A \to B$ with $\alpha(A) \leq B$ is referred to as a $\leq$-embedding.

Now consider $\KB_0$, the class of $B \in \KB$ with $\emptyset \leq B$. Equivalently, if $A \subseteq B$ is finite and non-empty, then $\delta(A) > 0$. Let $\K_0$ be the finite structures in $\KB_0$.  Any structure $B$ in $\KB_0$ carries a notion of \textit{dimension} $d^B$ associated to the predimension $\delta$ and a notion of \textit{$d^B$-independence}. If $X, Y \subseteq B$ are finite, write $d^B(X) = \delta(\cl_B(X)) = \min(\delta(Y) : X \subseteq Y \subseteq B)$ and $d^B(X/Y) = d^B(X\cup Y) - d^B(Y)$. If the ambient structure $B$ is clear from the context  then we omit it from the notation. Say that finite $X, Z$ are \textit{$d$-independent} over $Y$ (in $B$) if $d^B(X/YZ) = d^B(X/Y)$. In particular this implies  $\cl_B(XY)\cap \cl_B(YZ) = \cl_B(Y)$. (Here, we use the usual shorthand of $YZ$ for $Y \cup Z$.) For the particular predimension which we have given, it can be shown that $\cl_B$ satisfies the exchange condition, and therefore gives a pregeometry; furthermore, $d^B$ is the dimension in this pregeometry. 

We look at a version of the construction (also from \cite{Hr97}) where closure is uniformly locally finite. For this, we have a continuous, increasing $f : \R^{\geq 0} \to \R$ with $f(x) \to \infty$ as $x \to \infty$ and we consider $\K_f = \{A \in \K_0 : \delta(X) \geq f(\vert X \vert)\,\, \forall X \subseteq A\}$. For suitable choice of $f$ (call these \textit{good} $f$), $(\K_f, \leq)$ has the free $\leq$-amalgamation property: if $A_0 \leq A_1, A_2 \in \K_f$, then $A_i \leq A_1\coprod_{A_0} A_2 \in \K_f$. In this case we have an associated \textit{generic structure} $M_f$ (cf. \cite{W}, Theorem 6.2.13). This is a countable structure characterised by the following properties:
\begin{enumerate}
\item $M_f$ is the union of a chain of finite self-sufficient substructures, all in $\K_f$.
\item (\textit{$\leq$-Extension Property}) If $A \leq M_f$ is finite and $A \leq B \in \K_f$, there is a $\leq$-embedding $\beta : B \to M_f$ with $\beta(a) = a$ for all $a \in A$.
\end{enumerate}
Equivalently, $\K_f$ is the class of finite substructures of $M_f$, and isomorphisms between finite self-sufficient substructures of $M_f$ extend to automorphisms of $M_f$ (we refer to the latter property as $\leq$-homogeneity). Because of the function $f$, closure in $M_f$ is uniformly locally finite and (using free amalgamation and the $\leq$-extension property) it is equal to algebraic closure (\cite{W}, Lemma 6.2.17). It then follows from $\leq$-homogeneity that   $M_f$ is $\omega$-categorical and the type of a tuple is determined by the isomorphism type of its closure.

\begin{remarks}\label{frem}\rm
To construct good functions, we can take $f$ which are piecewise smooth,  and  where the right derivative $f'$ satisfies $f'(x) \leq 1/(x+1)$  and is non-increasing. The latter condition implies that $f(x+y) \leq f(x) + yf'(x)$ (for $y \geq 0$). It can be shown that under these conditions, $\K_f$ has the free $\leq$-amalgamation property. (This is originally from \cite{Hrpp}; see also Example 6.2.27 of \cite{W}, or Lemma 3.3 of \cite{E:predim}.)
\end{remarks}

\begin{remarks} \label{fremsimple} \rm (The following  is from \cite{Hr97}; see also Example 6.2.27 of \cite{W} or Corollary 2.24 and Theorem 3.6 of \cite{E:predim}.)
If $f$ also satisfies the slower growth condition 
\[ f(3x) \leq f(x) + 1\]
then the structure $M_f$ is supersimple of $SU$-rank 1. Moreover, for tuples $\a, \b$ in $M_f$, we have $SU(\tp(\b/ \a)) = d(\b/\a)$. To see the latter, note that (by additivity of both sides) it suffices to prove this when $\b$ is a single element $b$. Now, $d(b/\a)$ is a natural number and at most $\delta(b)$, so is $0$ or $1$. If it is $0$, then $b \in \acl(\a)$ so $SU(b/\a) = 0$. Thus, it suffices to show that if $\tp(b/\a)$ divides over $\emptyset$, then $d(b/\a) < d(b/\emptyset)$. This is done (in greater generality)  in the above references.
\end{remarks}

\subsection{The dimension function}

For the rest of the section suppose that $f$ is a good function as in Remarks \ref{frem} and $M_f$ is the corresponding generic structure. We suppose that $h = (\dim, \m) : \Def(M_f) \to \R^{> 0}$ is a dimension - measure function. In this subsection we relate $\dim$ to the dimension $d$ coming from the predimension (which will be the same as $SU$-rank if $M_f$ is simple), and the measure will not be used.

\begin{notation}\rm For tuples $\a, \b$ in $M_f$ let $\loc(\b/\a)$, the\textit{ locus of $\b$ over $\a$}, be the set of realizations in $M_f$ of $\tp_L(\b/ \a)$, the $L$-type of $\b$ over $\a$. By $\omega$-categoricity, this is definable by an $L$-formula with parameters from $\a$. Let $\dim(\b/\a)$  denote the dimension  of this set.
\end{notation}

The Fubini property in MS-measurability implies that $\dim$ is additive: $\dim(\b/\a) = \dim(\a\b/ \emptyset)-\dim(\a/\emptyset)$. We also have $\dim(M_f^n) = n \dim(M_f)$. Note the existence of $\dim$-generic points: if $D \in \Def(M_f)$ is definable over a finite tuple $\a$, then  $\dim(D) = \max (\dim(\b/\a): \b \in D)$. From this we deduce that if $D' \subseteq D$ is definable, then $\dim(D') \leq \dim(D)$. A further property of $\dim$ which we require is the weak algebraicity property  that if $\b \in \acl(\a)$, then $\dim(\b/\a) = 0$. Of course, $d$ also has these properties. 

Under these assumptions on $\dim$ (and the given conditions on $f$) we will show that $\dim$ is just a scaled version of the dimension $d$. 

\begin{theorem} \label{dimthm} Suppose $f'(x) \leq \frac{1}{2(x+1)}$. If $\a, \b$ are finite tuples in $M_f$, then we have
\[\dim(\b/\a) = \dim(M_f)d(\b/\a).\]
\end{theorem}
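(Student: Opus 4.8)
The plan is to reduce everything to the case of a single element and then to two cases according to the value of $d$. By additivity of both $\dim$ and $d$ (writing $\b = (b_1,\ldots,b_k)$ and telescoping $\dim(\b/\a) = \sum_i \dim(b_i/\a b_1\cdots b_{i-1})$, and likewise for $d$), it suffices to prove $\dim(b/\a) = \dim(M_f)\,d(b/\a)$ for a single element $b$. Since $\delta(\{b\})=1$ we have $d(b/\a)\le 1$, so $d(b/\a)\in\{0,1\}$. If $d(b/\a)=0$ then $b\in\cl(\a)=\acl(\a)$, so weak algebraicity of $\dim$ gives $\dim(b/\a)=0$ and both sides vanish. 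Writing $c_0 := \dim(M_f)$, the remaining task is to show that whenever $d(b/\a)=1$ one has $\dim(b/\a)=c_0$. The upper bound $\dim(b/\a)\le c_0$ is immediate, since $\loc(b/\a)$ is a definable subset of $M_f$ and $\dim$ is monotone on definable subsets.

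So the whole content is the lower bound $\dim(b/\a)\ge c_0$ for $d$-generic $b$, which I would recast as: \emph{every $d$-generic $1$-type (over any finite base) has the same value of $\dim$, namely $c_0$}. First I would note that this common value can only be $c_0$: since $\dim>0$ forces non-algebraicity and hence $d=1$, only $d$-generic types contribute positive dimension, and a $\dim$-maximal $d$-generic type exists with value $c_0$. Indeed, fixing a base $\a$ and a large $N$, partitioning $M_f^N$ by $1$-types over $\a\,b_1\cdots b_{i-1}$ and using $\dim(M_f^N)=N c_0$ together with additivity forces each generic step in a $\dim$-maximal configuration to contribute exactly $c_0$; this exhibits $d$-generic types attaining $c_0$. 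Thus it remains to equate the $\dim$ of an \emph{arbitrary} $d$-generic type with that of a maximal one.

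The engine for equating dimensions of different $d$-generic types is the MS Fubini property applied to finite-fibred maps: if two $d$-generic types over $\a$ occur as the types of generating vertices $v_1,v_2$ (meaning $\cl(Av_i)=W$) inside one self-sufficient extension $W$ of $A=\cl(\a)$ with $\delta(W/A)=1$, then $v_1$ and $v_2$ are interalgebraic over $\a$ and the locus of this $W$-configuration defines a finite-to-finite correspondence between full-dimensional parts of the two loci; since finite fibres preserve $\dim$ (Definition \ref{MSmeas}(i),(iii)), the two types have equal $\dim$. I would then argue that any $d$-generic type can be connected, through a chain of such extensions $W$, to a $\dim$-maximal one: given a target generic configuration, one builds the intermediate self-sufficient $\delta(\cdot/A)=1$ structures realizing the required marked-vertex types by free $\le$-amalgamation over $A$ (using $A\le M_f$ and the $\le$-extension property to realize them in $M_f$), employing instances of the distinguished $3$-ary relation $R$ to create the overlaps that link successive types. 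Equality of $\dim$ propagates along the chain, giving $\dim(b/\a)=c_0$.

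The main obstacle is precisely this connectivity-and-construction step: the linking structures $W$ satisfy $\delta(W/A)=1$ and are therefore very close to the density boundary of $\K_f$, so the delicate point is to verify that all the intermediate amalgams still lie in $\K_f$, i.e.\ satisfy $\delta(X)\ge f(\vert X\vert)$ for every subset $X$. This is exactly where the strengthened hypothesis $f'(x)\le \frac{1}{2(x+1)}$ is needed: the extra factor of $2$ relative to the good-function bound $f'(x)\le \frac{1}{x+1}$ of Remarks \ref{frem} provides the predimension slack to absorb the linking relations (in particular an instance of $R$) while keeping every subset above the $f$-threshold, so that the required $\le$-embeddings into $M_f$ exist. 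Granting this combinatorial input, the Fubini-based transport of $\dim$ along the chain finishes the proof, yielding $\dim(b/\a)=c_0\,d(b/\a)$ in all cases.
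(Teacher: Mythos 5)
Your reduction to a single element, the treatment of the $d(b/\a)=0$ case, the trivial upper bound $\dim(b/\a)\le\dim(M_f)$, and the existence of some $1$-type over $\a$ of dimension $\dim(M_f)$ (finitely many $1$-types by $\omega$-categoricity, so one locus in the partition of $M_f$ is full-dimensional) are all correct and match the paper. The fatal gap is in your transport step. If $v_1,v_2$ are interalgebraic over $\a$, then $\cl(Av_1)=\cl(Av_2)=W$, and since in $M_f$ the type of a tuple is determined by the isomorphism type of its closure, $\tp(v_1/\a)$ and $\tp(v_2/\a)$ are the types of two marked generators of the \emph{same} structure $W$ over $A$. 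The same applies to any definable finite-to-finite correspondence between loci, because finite fibres in both directions is exactly two-sided algebraicity. Hence your linking relation never leaves a single closure-isomorphism class: a type $p$ can occur as a generating-vertex type of some $W$ with $\delta(W/A)=1$ only if $W\cong\cl(Av_p)$, so every chain starting at $p$ stays among types whose closure is isomorphic to $\cl(Av_p)$. In particular the free generic type (with $\cl(Ab)=A\cup\{b\}$) can never be chained to a type with nontrivial closure, and there is no reason the $\dim$-maximal type lies in the class of an arbitrary given $p$. The connectivity claim, which carries the entire weight of the lower bound, is therefore false as stated, and no choice of intermediate amalgams can repair it within this mechanism.

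A secondary problem: your proposed links add a relation among existing vertices, which drops $\delta$ by $1$, and near the density boundary this leaves $\K_f$ regardless of the bound on $f'$. Boundary structures genuinely occur: for $f(x)=\log_8(x+1)$ the structure $A$ used in the proof of Theorem \ref{HrCon} satisfies $\delta(A)=9=f(\vert A\vert)$ exactly, so any extension $W\supset A$ with $\delta(W)=\delta(A)$ has $f(\vert W\vert)>\delta(W)$ and lies outside $\K_f$. The paper avoids both obstacles at once (Proposition \ref{nonorthog} via Lemma \ref{tech}): each linking relation $R(c,s_i,t_i)$ is paired with a \emph{new} vertex $s_i$, so $\delta$ is unchanged and the factor $2$ in $f'(x)\le\frac{1}{2(x+1)}$ absorbs the cardinality growth; the price is giving up one-to-one correspondences in favour of the statement that every $r$-tuple of $M_f$ is algebraic over $\a$ together with $r+2$ realizations of the single given type. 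This yields
\[ r\dim(M_f)=\dim(M_f^r)\le (r+2)\dim(\loc(b/\a)), \]
and letting $r\to\infty$ recovers the exact equality: the asymptotic slack $r/(r+2)\to 1$ is precisely what replaces the exact interalgebraic correspondence your chain argument would have required, and which, as explained above, cannot exist between types with non-isomorphic closures.
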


The theorem follows from the following (always assuming the given condition on $f$).

\begin{proposition}\label{nonorthog} Let $\a, b \in M_f$ with $b\not\in \acl(\a)$ and $P = \loc(b/\a)$.  Then for every $r \in \N$ and  $\y \in M^r$ there is some $\x \in P^{r+2}$ with $\y \in \acl(\x\a)$.
\end{proposition}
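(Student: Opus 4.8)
The plan is to reduce the statement to a single finite amalgamation step and then invoke the $\le$-extension property of $M_f$. Write $A = \cl(\a)$ and $Y = \cl(\a\y)$; both are finite and self-sufficient in $M_f$, and since algebraic closure coincides with $\cl$, the conclusion $\y \in \acl(\x\a)$ is equivalent to $Y \subseteq \cl(\a\x)$. So it suffices to build a finite $B \in \K_f$ with $Y \le B$ containing $r+2$ new points $x_1,\dots,x_{r+2}$ such that each $\cl_B(\a x_i)$ is isomorphic over $\a$ to $B_0 := \cl(\a b)$ with $x_i$ in the role of $b$ (so that $x_i \in P$), and such that $Y \subseteq \cl_B(\a\x)$. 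Applying the extension property to $Y \le B$ yields a $\le$-embedding $\beta\colon B \to M_f$ fixing $Y$, hence fixing $\a$ and $\y$; because $\beta(B) \le M_f$, the images $\beta(x_i)$ lie in $P$ and $\y \in Y \subseteq \cl(\a\,\beta(\x)) = \acl(\a\,\beta(\x))$, as required.

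To construct $B$, let $m = d(\y/\a) \le r$ and fix a $d$-independent tuple $z_1,\dots,z_m \in Y$ over $A$ with $\cl(A\z) = Y$; note $z_s \notin A$. Over $A$ freely amalgamate $r+2$ disjoint copies $B_0^{(1)},\dots,B_0^{(r+2)}$ of $B_0$, with distinguished points $x_1,\dots,x_{r+2}$ playing the role of $b$; since $b \notin \acl(\a)$ we have $d(x_i/\a) = 1$, so each copy raises the predimension by exactly $1$ over $A$. Now link the copies using the $3$-ary relation along a path: impose $R(x_s, x_{s+1}, z_s)$ for $s = 1,\dots,m$, leaving the copies $B_0^{(m+2)},\dots,B_0^{(r+2)}$ unlinked. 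Each relation lowers $\delta$ by $1$, so if $D = B_0^{(1)}\cup\cdots\cup B_0^{(r+2)}$ then $\delta(D) = \delta(A) + (r+2)$ and $\delta(B) = \delta(Y) + (r+2) - m = \delta(A) + (r+2) = \delta(D)$.

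Three things must be verified. First, each $x_i$ still realizes $P$: every linking relation involves two distinct distinguished points together with some $z_s$, so none of them lies inside a set of the form $A \cup \{x_i\}\cup\{\text{one further point}\}$; hence the $i$-th copy is self-sufficient and equals $\cl_B(\a x_i)$, giving $\tp_L(x_i/\a) = \tp_L(b/\a)$. Second, $Y \le B$ and $Y \subseteq \cl_B(\a\x)$: adjoining any nonempty set $S$ of distinguished points to $Y$ activates at most $|S|-1$ path relations, so $\delta$ strictly increases and $Y \le B$; conversely $\cl_B(\a\x)$ contains each copy, hence contains $D$, and as $\delta(D) = \delta(B)$ with $D \subsetneq B$ this forces $\cl_B(\a\x) = B \supseteq Y$. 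Third, $B \in \K_f$: this is the one genuinely technical point, and it is where the hypothesis $f'(x) \le \tfrac{1}{2(x+1)}$ is used. Since the relations are spread along a path, a subset $X$ meeting $p$ of the distinguished points and $q$ of the $z_s$ activates at most $\min(q, p-1)$ relations; using $f(u+v) \le f(u) + v/2$ (a consequence of the derivative bound) one checks in the cases $q \le p-1$ and $q \ge p$ that $\delta(X) \ge f(|X|)$, the factor $\tfrac12$ being exactly what makes the inequality go through.

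The main obstacle is this final check that $B \in \K_f$. The relations must be distributed thinly (along a path, rather than concentrated on a single pair of points), so that no small subset acquires predimension below $f(|X|)$; this is precisely why the stronger growth condition $f'(x)\le\frac{1}{2(x+1)}$ is imposed rather than the bare goodness condition $f'(x)\le\frac{1}{x+1}$, under which the estimate in the case $q \ge p$ would fail. The gap between the $m+1 \le r+1$ points actually linked and the allotted $r+2$ then leaves comfortable slack, the surplus copies of $B_0$ simply being extra unlinked realizations of $P$.
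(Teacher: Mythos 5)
Your overall architecture is sound and genuinely more direct than the paper's own proof. The paper proceeds in two steps through Lemma \ref{tech}: it first attaches a single new realization $c$ of $P$ to a $d$-basis $t_1,\ldots,t_r$ of $\y$ over $A$ by means of $r$ \emph{fresh} points $s_1,\ldots,s_r$ and relations $R(c,s_i,t_i)$ (so each link is paid for by a brand-new point and $\delta$ is unchanged), and then, in a second application of the same lemma, ties the now self-sufficient tuple $A s_1\ldots s_r$ to $r+1$ further realizations $b_0,b_1,\ldots,b_r$ via $R(b_j,s_j,b_0)$. You instead perform a single amalgamation, linking the copies along a path directly to a $d$-basis $z_1,\ldots,z_m$ of $Y=\cl(\a\y)$, with each link consuming one unit of predimension; the slack $r+2-m\ge 2$ replaces the paper's bookkeeping with fresh points. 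Your verifications that each $B_0^{(i)}\le B$ (hence $\beta(x_i)$ realizes $P$ after a $\le$-embedding, by $\le$-homogeneity), that $Y\le B$, and that $\cl_B(\a\x)=B\supseteq Y$ are correct in substance: they follow from $A\le B_0$, the $d$-independence of the $z_s$ over $A$, and the count that a set containing $p$ distinguished points and $q$ of the $z_s$ contains at most $\min(q,p-1)$ links.

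However, there is a genuine gap in the one step you yourself flag as decisive, the verification that $B\in\K_f$. The estimate you invoke, $f(u+v)\le f(u)+v/2$, is too weak: your parameters $p$ and $q$ count only the \emph{linked} points, whereas the dangerous subsets contain the \emph{bulk} of the copies of $B_0=\cl(\a b)$, which can be arbitrarily large. Concretely, take $X=D$, the union of all $r+2$ copies (no links active). Then the required inequality is $f(|X|)\le \delta(A)+r+2$, but your estimate only yields $f(|X|)\le f(|A|)+(r+2)\vert B_0\setminus A\vert/2$, whose right-hand side exceeds $\delta(A)+r+2$ as soon as $\vert B_0\setminus A\vert\ge 3$; the same problem arises for subsets consisting of a few full copies together with one link. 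Your case split $q\le p-1$ versus $q\ge p$ cannot repair this, since it does not see the sizes of the pieces at all. What is actually needed is the tangent-line estimate $f(u+v)\le f(u)+vf'(u)$, with $f'$ non-increasing and $f'(x)\le \frac{1}{2(x+1)}$, \emph{evaluated at the size of the largest piece}, together with a case distinction on the relative sizes of $Y$ and the copies; this is exactly the shape of Cases 1 and 2 in the paper's proof of Lemma \ref{tech}, where the split is on $\vert T\setminus A\vert$ versus $r\vert C\setminus A\vert$. For instance, for $X=D$ one gets $f(|D|)\le f(|B_0|)+(r+1)\vert B_0\setminus A\vert\, f'(|B_0|)\le \delta(A)+1+\frac{r+1}{2}$, which is what makes the inequality close. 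So the gap is local and repairable by redoing your final check in the style of Lemma \ref{tech}, but as written the crucial inequality does not go through.
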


We note that  \cite{Marimon} shows that  Theorem  \ref{dimthm} holds for a wider class of Hrushovski constructions than we give here.

First we show how Theorem \ref{dimthm} follows from the proposition.

\begin{proof}[Proof of Theorem \ref{dimthm}]
By the additivity  property of both $\dim$ and $d$, it will suffice to prove the statement when $\b= b$ is a single element. If $b \in \acl(\a)$, then the statement holds as both sides of the equation are zero, by the weak algebraicity property of $\dim$ and $d$. So now suppose that  $b \not\in \acl(\a)$. Let $P = \loc(b/\a)$, as in Proposition \ref{nonorthog}.  Consider 
\[Y =  \{\y = (y_1,\ldots, y_r)  \in M_f^r : y_1,\ldots, y_r \in \acl(\x\a)  \mbox{ for some } \x \in P^{2+r}\}.\]
By $\omega$-categoricity, this set is definable by an $L$-formula with parameters from $\a$ (for example, it is invariant under automorphisms of $M_f$ fixing $\a$). Thus (by existence of generic points for $\dim$) there is $\c \in Y$ with $\dim(Y) = \dim(\c/\a)$. By definition of $Y$, there are $b_1,\ldots, b_{r+2} \in P$ with $\c \in \acl(\a b_1\ldots b_{r+2})$. It follows (using weak algebraicity) that 
\[ \dim(Y) = \dim(\c/\a) \leq \dim(b_1\ldots b_{r+2}/\a)  \leq \dim(M_f^{r+2}) = (r+2)\dim(M_f).\]
But by Proposition \ref{nonorthog}, we have $Y = M_f^r$. So 
\[r\dim(M_f) = \dim(M_f^r)  = \dim(Y)  \leq (r+2)\dim(M_f).\]
Dividing by $(r+2)$ and letting $r \to \infty$, we obtain that $\dim(b/\a) = \dim(M_f)$. As $d(b/\a) = 1$, this gives $\dim(b/\a) = \dim(M_f)d(b/\a)$, as required. 
\end{proof}

The proof of Proposition \ref{nonorthog} is a  technical argument with Hrushovski constructions, so we relegate it to a separate Section: see \ref{messy_proof}. Marimon's approach \cite{Marimon} to proving non-MS-measurability of other examples of $\omega$-categorical Hrushovski constructions avoids the need for a result such as Theorem \ref{dimthm}. 

\begin{remarks} \label{Psf} \rm  It is an open problem to determine whether any of the $M_f$ are (or are not) pseudofinite. We note that Theorem \ref{dimthm} provides some information relevant to this question. 
Suppose that $f$ is a good function with $f'(x) \leq \frac{1}{2(x+1)}$ and $\K_f$ is the corresponding amalgamation class with generic structure $M_f$. \textit{Assume} that $M_f$ is elementarily equivalent to an ultraproduct $\M = \Pi_U F_i$ of finite structures. Following \cite{Hr:psfdim}, if $\Phi(\x)$ is a formula with parameters from $M$, then  the \textit{coarse pseudofinite dimension} $\Delta(\Phi(\x))$ is the standard part of the non-standard real $\Pi_U \log\vert \Phi(F_i)\vert / \log\vert F_i\vert$. We will show that for every $L$-formula $\Phi(\x)$ (without parameters), we have $\Delta(\Phi(\x)) = d(\Phi(\x))$. 

In principle, we could deduce the result from Theorem \ref{dimthm} as $\Delta$ has the properties required in the proof of \ref{dimthm}, as long as we expand the language by dimension quantifiers so that it becomes continuous (see Section 2.7 of \cite{Hr:psfdim}). However, it seems clearer to give a fuller  argument which  is essentially  a modification of that given for Theorem \ref{dimthm}.  

If $\a$ is a finite tuple in $M_f$, let $\Phi_{\a}(\x)$ denote an $L$-formula isolating $\tp(\a/\emptyset)$ (the $L$-type of $\a$ in $M_f$). Such a formula exists, by $\omega$-categoricity. If $\b$ is another tuple, then $\Phi_{\a\b}(\a, \y)$ isolates $\tp(\b/\a)$. 

\textit{Claim:\/} Suppose $\a$ is a $k$-tuple in $M_f$ and $b \in M_f$. Suppose  $\u$ is a $k$-tuple in $\M$ and $\M \models \Phi_{\a}(\u)$. Then $\Delta(\Phi_{\a b}(\u, y)) = d(b/\a)$. 

If $d(b/\a) = 0$ then $b$ is algebraic over $\a$. The size of $\acl(\a)$ is bounded uniformly (actually, in $k$),  so $\Phi_{\a b}(\u, y)$  has finitely many solutions in $\M$. Thus its pseudofinite dimension is 0. 

Now suppose $b \not\in \acl(\a)$, so $d(b/\a) = 1$. Let $r \in \N$. There is a formula $C_r(y, x_1\ldots x_{r+2}\z)$ such that if $\Phi_{\a b}(\a' b_i)$ (for $i \leq r+2$), then $C_r(M_f, b_1\ldots b_{r+2}, \a')$ is $\acl(b_1\ldots b_{r+2}, \a')$. Let $K(r)$ bound the size of this algebraic closure.

The set $Y$ in the proof of Theorem \ref{dimthm} is defined by $Y(\y,\a)$ where $Y(\y, \z)$ is the formula:

\[  \exists x_1 \ldots x_{r+2} \bigwedge_{i \leq r+2} \Phi_{\a b}(\z x_i) \wedge \bigwedge_{j \leq r}C_r(y_j, x_1\ldots x_{r+2}\z).\]

Moreover, by Proposition \ref{nonorthog}, 
\[ M_f \models (\forall \z)( \Phi_{\a}(\z) \to \forall y_1\dots y_r Y(y_1\ldots y_r \z)),\]
thus this formula also holds in $\M$.

Suppose $\u \in \M$ and $\M \models \Phi_{\a}(\u)$. Denote by $\u_i$ the $k$-tuple of $i$-th coordinates (in $F_i$) in $\u$. From the definition of $Y$, for almost all $i$ we have:

\[ \vert Y(F_i, \u_i) \vert \leq K(r)^r \vert \Phi_{\a b}(\u_i, F_i) \vert^{r+2}.\]

Thus, as $Y(\M, \u) = M^r$, for almost all $i$:

\[ K(r)^r \vert \Phi_{\a b}(\u_i, F_i) \vert^{r+2} \geq \vert F_i \vert^r.\]

As $\vert F_i \vert \to \infty$, we obtain $\Delta(\Phi_{\a b}(\u, y)) \geq \frac{r}{r+2}$. But $r$ here is arbitrary, therefore $\Delta(\Phi_{\a b}(\u, y)) \geq 1$. The reverse inequality is trivial, so we have the claim.

Now suppose $\b = (b_1\ldots b_n)$ is an $n$-tuple in $M_f$. We show that if $\u$ is a  tuple in $\M$ and $\M \models \Phi_{\a}(\u)$, then $\Delta(\Phi_{\a\b}(\u, \y)) = d(\b/\a)$. The required formula for general $L$-definable sets follows as each such is a finite  union of pairwise-disjoint sets of this form. We may assume that $d(\b/\a) = n$ and  we prove the result by induction on $n$. Let $D$ be $\Phi_{\a\b}(\u, \M) \subseteq \M^n$ and $E = \Phi_{\a b_1\ldots b_{n-1}}(\u, \M)$. Let $f : D \to E$ be projection onto the first $n-1$ coordinates. By the claim, the fibres of $f$ have coarse pseudofinite dimension $d(b_n/b_1\ldots b_{n-1} \a) = 1$. By induction hypothesis, $\Delta(E) = n-1$. Thus, by Lemma 2.8 (4) of \cite{Hr:psfdim}, $\Delta(D) = n-1 +1 = n$, as required.  (In order to apply the results from \cite{Hr:psfdim} we need to first enrich the language so that $\Delta$ becomes continuous, but this has no effect on the dimension of  formulas in the original language.) \end{remarks}

\subsection{A structure which not MS-measurable}

\begin{theorem}\label{HrCon}
There is an $\omega$-categorical, supersimple structure $M_f$ of $SU$-rank 1 which does not satisfy the amalgamation property in  Corollary \ref{23}. In particular, $M_f$ is not MS-measurable.
\end{theorem}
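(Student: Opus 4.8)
The plan is to obtain $M_f$ as a Hrushovski generic structure and then to \emph{refute} the $d$-version of the amalgamation property of Corollary \ref{23}, from which non-measurability will follow formally. First I would fix a good function $f$ as in Remarks \ref{frem} which in addition satisfies the slow-growth condition $f(3x)\leq f(x)+1$ of Remarks \ref{fremsimple} and the sharper bound $f'(x)\leq\frac{1}{2(x+1)}$ of Theorem \ref{dimthm}. The general theory then already gives that $M_f$ is $\omega$-categorical, supersimple of $SU$-rank $1$, with $\acl$ uniformly locally finite and $SU(\tp(\b/\a))=d(\b/\a)$. The role of the extra bound on $f'$ is that, \emph{were} $M_f$ MS-measurable, Theorem \ref{dimthm} would force $\dim=\dim(M_f)\cdot d$ on all definable sets (using the existence of $\dim$- and $d$-generic points). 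Hence conditions (a) and (b) of Corollary \ref{23} phrased with $\dim$ are, up to the positive scalar $\dim(M_f)$, the same as those phrased with $d$, and the conclusion of Corollary \ref{23} becomes the assertion that the amalgamation set has full $d$-dimension. So it suffices to exhibit $C$-definable infinite $S_1,\ldots,S_n$ and a $C$-definable $E\subseteq S=S_1\times\cdots\times S_n$ satisfying (a) and (b) but with $d(\{\b\in S:\pi_I(\b)\in\pi_I(E)\text{ for all }I\in[n]^{n-1}\})<d(S)$; this contradicts Corollary \ref{23} under the assumption of MS-measurability, so $M_f$ is not MS-measurable.

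The heart of the proof is the construction of this configuration, and here I would take $n=4$. This is essentially forced by the model theory: the Independence Theorem of Kim and Pillay yields independent $3$-amalgamation in any simple theory, so the first genuinely obstructed case is $n=4$, exactly as flagged in the Introduction. The reserved ternary relation $R$ is the natural device for producing such an obstruction. I would use it to attach to each generic triple an algebraic ``bond'' lying in the algebraic closure of that triple (an $R$-completion realized inside $\cl$), and arrange the defining condition of $E$ so that a $4$-tuple $(b_1,b_2,b_3,b_4)$ lies in $E$ precisely when the four bonds sitting on the faces of the tetrahedron $\{b_1,b_2,b_3,b_4\}$ fit together coherently, the whole configuration being self-sufficient. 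The algebraicity condition (b) is then immediate, since each bond lies in the algebraic closure of its face and the configuration is closed. Condition (a) for each $3$-subset $I$ amounts to the fact that an appropriate generic $3$-configuration does extend to an $E$-configuration; this is a $3$-amalgamation statement and I would verify it by producing the required closed extension inside $\K_f$ via the $\leq$-Extension Property, so that $d(\pi_I(E))=\sum_{i\in I}d(S_i)$.

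The main obstacle is to show that the amalgamation set is \emph{deficient}. Here the decisive subtlety is that one cannot simply take every $S_i=M_f$ and let $E$ be the locus of a single symmetric $R$-configuration: over $C$ the generic $k$-tuple of elements of $M_f$ has a unique type, so a full-dimensional projection $\pi_I(E)$ would automatically contain the generic $3$-subtuple of a generic $4$-tuple, and the free $4$-tuple would then meet all four face-projections (each completed by its own missing coordinate) and amalgamate trivially. To escape this, I would build $S_1,\ldots,S_4$ as loci of $R$-linked configurations on which the relevant generic $3$-types genuinely split, and design $R$ so that the bond assignment carries a nontrivial ``cocycle'' around the tetrahedron, of the kind responsible for the failure of $4$-amalgamation in non-one-based finite-rank theories. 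The point is that each of the four face-completability conditions is individually generic, giving (a), while their \emph{conjunction} over-determines the configuration: I must prove that any $\b$ meeting all four face-projections is forced to have its configuration lie on the coherent branch, so that the missing data is pushed into the algebraic closure of the rest and the amalgamation set collapses onto $E$ together with its finite extensions, of $d$-dimension $n-1<n=d(S)$. This is a predimension-and-closure computation carried out with $\delta$, the characterization of $\cl$ and of $d$-independence, and the free $\leq$-amalgamation defining $M_f$. Throughout, the side condition to monitor is that every finite configuration invoked — the bonded faces, the tetrahedron, and the witnesses supplied by the Extension Property — genuinely lies in $\K_f$, i.e.\ satisfies $\delta(X)\geq f(|X|)$ on every subset; this is where the specific choice of $f$ re-enters and constrains the admissible sizes, and checking it is the routine but unavoidable bookkeeping that makes the obstruction legitimate.
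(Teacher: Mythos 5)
Your overall frame is exactly the paper's: choose a good $f$ with $f(3x)\le f(x)+1$ and $f'(x)\le\frac{1}{2(x+1)}$, so that $M_f$ is $\omega$-categorical and supersimple of $SU$-rank $1$, and use Theorem \ref{dimthm} to identify any putative MS-dimension with $\dim(M_f)\cdot d$, reducing the theorem to refuting the $d$-version of Corollary \ref{23}. But the counterexample itself --- the only substantive content of the theorem --- is not constructed, and the two design decisions you do commit to point in a direction that fails. First, your verification of condition (b) is wrong: (b) requires that the \emph{coordinates} be interalgebraic, i.e.\ that for every tuple in $E$ each $b_i$ lie in $\acl$ of the remaining $b_j$'s. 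Placing a ``bond'' in the algebraic closure of each face does nothing towards this: if $(b_1,\ldots,b_4)$ is $d$-independent then $b_4\notin\acl(b_1b_2b_3)$ no matter how the faces are decorated, since $\acl$ is self-sufficient closure and $d$ drops only when the predimension drops. To get (b) one must engineer a finite structure in which each $(n-1)$-subset of a distinguished $n$-set has closure containing the whole set; this is precisely what the paper's configuration $A$ does (ten points in a $10$-ary relation $S$, fattened by $8^9-11$ extra points via an $11$-ary relation $U$, so $\delta(A)=9$ while $\vert A\vert=8^9-1$), and it is why the language carries high-arity relations rather than only the ternary $R$ (which the paper reserves for the proof of Proposition \ref{nonorthog}, not for Theorem \ref{HrCon}).

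Second, the insistence on $n=4$ with a ``cocycle'' obstruction. The paper's obstruction is not a coherence obstruction at all but a counting one, and your worry about unique generic types is misplaced: the paper takes every $S_i=M_f$ and $E$ a single $\emptyset$-definable orbit, and kills the conjunction of face conditions by \emph{size}. Namely, a $d$-independent solution $\b$ would have $B=\acl(\b)$ containing the closures of all ten faces, which pairwise meet in the self-sufficient $8$-subsets, so $\vert B\vert\ge 10\cdot 8^9-90$ while $\delta(B)=10$; then $f(\vert B\vert)>10=\delta(B)$ contradicts $B\in\K_f$. Moreover this mechanism provably cannot fire at $n=4$ under your own hypotheses: $A\in\K_f$ with $\delta(A)=n-1$ forces $\vert A\vert\le f^{-1}(n-1)$, the glued set has size about $n\vert A\vert$, and $f'(x)\le\frac{1}{2(x+1)}$ (which you need for Theorem \ref{dimthm}) gives $f(n\vert A\vert)\le f(\vert A\vert)+\frac12\ln n\le (n-1)+\frac12\ln n$, which exceeds $n$ only when $n>e^2\approx 7.39$ --- this is exactly why the paper works with $n=10$ and $f=\log_8(x+1)$, where $3+\frac12\ln 4\approx 3.69<4$ shows $n=4$ is out of reach. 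So at $n=4$ you would need your unconstructed cocycle to exist in a free amalgamation class, where independent configurations amalgamate freely over closed sets, and you give neither a construction nor an argument that the resulting $E$ satisfies (a) and (b) while the amalgamation set is deficient. As written, the proposal contains no counterexample, and its two concrete commitments (face-bonds for (b), and $n=4$) are respectively incorrect and quantitatively foreclosed.
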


\begin{proof} We choose $f$ so that: $\K_f$ is a free amalgamation class; the generic $M_f$ is supersimple of SU-rank 1; the independent amalgamation property Corollary \ref{23} does not hold. We are only interested in providing an example, so we choose economy of effort over elegance.

Take $L$ to have a $3$-ary relation $R$, a $10$-ary relation $S$ and a $11$-ary relation $U$. Let $f(x) = \log_8(x+1)$. Then $f'(x) = 1/(\ln(8)(x+1)) < 1/(2(x+1))$, so by Remarks \ref{frem}, $\K_f$ is a free amalgamation class and the hypothesis on $f$ in Theorem \ref{dimthm} holds. We also have $f(3x) \leq f(x) + 1$, so by Remarks \ref{fremsimple}, the generic $M_f$ is supersimple, with $d$-independence being the same as non-forking, and  $M_f$ is of $SU$-rank 1.

Consider the $L$-structure $A$ with points $a_1,\ldots, a_{10}, u_1, \ldots, u_r$, where $r = 8^9 - 11$, and relations $S(a_1,\ldots,a_{10})$ and $U(a_1,\ldots,a_{10}, u_i)$ (for $i \leq r$). Then $\delta(A) = 9$ and $\vert A \vert = 8^9 - 1$, so $\delta(A) \geq f(\vert A \vert)$. It is easy to check that for any $X \subset A$ we have $\delta(X) \geq f(\vert X\vert)$, so $A \in \K_f$. Moreover (in the notation of \ref{23}) for each $I \in [10]^9$, the tuple $\a_I$ is $d$-independent (in $A$) and has closure $A$. Note also that if $I \in [10]^8$ then $\a_I \leq A$. 

Suppose, for a contradiction, that the conclusion of Corollary \ref{23} holds, where $\dim$ is given by $SU$-rank (in this case, given by the dimension function $d$).  We will apply this where $n = 10$, $S = M_f^{10}$ and 
\[ E = \{ \alpha(a_1\ldots a_{10}) : \alpha : A \to M_f \mbox{ is an $\leq$-embedding}\}.\]
Note that $E$ is $\emptyset$-definable, the algebraic closure (equal to the $\leq$-closure) of every element of $E$ is isomorphic to $A$ and (by the $\leq$-homogeneity of $M_f$) all elements of $E$ have the same type over $\emptyset$. 

So if the conclusion of Corollary \ref{23} holds, there exist a $d$-independent set $B_0 = \{b_1,\ldots, b_{10}\}$ of distinct elements of  $M_f$ with the property that for each $I \in [10]^9$ we have $\acl_{M_f}(\B_I) \cong A$ (via an isomorphism taking $\b_i \mapsto \a_i$), where $B_I = \{b_i : i \in I\}$. Let $B = \acl(B_0)$. By the $d$-independence,  $\delta(B) = 10$ and we have $\acl(B_I) \cap \acl(B_{I'}) = B_I\cap B_{I'} = B_{I\cap I'}$ for $I\neq I' \in [10]^9$.

Thus
\begin{multline*}
\vert B \vert \geq \vert B_0\vert + \vert \bigcup_{I \in [10]^9} \acl(B_I) \setminus B_0\vert 
= \vert B_0\vert  + \sum_{I \in [10]^9} \vert \acl(B_I) \setminus B_0\vert \\ \geq 10 + 10(8^9 - 1 - 9) = 10.8^9 - 90.
\end{multline*}

So 
\[ f(\vert B \vert) \geq \log_8(10. 8^9 - 89) > 10 = \delta(B)\]
therefore $B \not\in \K_f$: a contradiction. Thus the amalgamation property in the conclusion of Corollary \ref{23} does not hold, and in particular, $M_f$ is not MS-measurable. 
\end{proof}

\subsection{Proof of Proposition \ref{nonorthog}}\label{messy_proof}

Before proving the proposition, we give the following technical lemma.

\begin{lemma}\label{tech}
Suppose  $R$ is a $3$-ary relation in $L$ and $f'(x) \leq 1/2(x+1)$. Let $A \leq C, T \in \K_f$ (with $A \neq C, T$) and $E$ the free amalgam of $C$ and $T$ over $A$. Suppose $t_1, \ldots, t_r \in T\setminus A$ are $d$-independent over $A$, and let $c \in C\setminus A$. Let $F = E \cup \{s_1, \ldots, s_r\}$ with additional relations $R(c, s_i, t_i)$ (for $1 \leq i \leq r$). Then $As_1\ldots s_r, C, T \leq F$ and $F \in \K_f$. 
\end{lemma}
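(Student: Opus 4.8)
The plan is to work throughout with the predimension bookkeeping on subsets of $F$. For $X \subseteq F$ write $X_0 = X \cap E$ and, for $s_i \in X$, call $s_i$ \emph{complete in $X$} if also $c, t_i \in X$ (so that the tuple $R(c,s_i,t_i)$ lies in $X$) and \emph{incomplete} otherwise. Since each $s_i$ is a new point carrying only the relation $R(c,s_i,t_i)$, a direct count gives
\[ \delta(X) = \delta(X_0) + \#\{ s_i \in X : s_i \text{ incomplete in } X\}, \qquad |X| = |X_0| + \#\{s_i \in X\}. \]
I will freely use that $E = C\coprod_A T \in \K_f$ with $C \leq E$ and $T \leq E$ (free amalgamation, as $A \leq C,T$), the standard properties of $\leq$ (intersection with a self-sufficient set, transitivity, and $\cl_B = \cl_C$ when $B \leq C$), and the additivity of predimension across a free amalgam: writing $P = X_0 \cap C$, $Q = X_0 \cap T$, $A' = X_0 \cap A = P \cap Q$, one has $\delta(X_0) = \delta(P) + \delta(Q) - \delta(A')$, since there are no relations linking $C \setminus A$ to $T \setminus A$. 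Finally, from $d$-independence of $t_1,\dots,t_r$ over $A$ and $t_i \notin \acl(A) = A$ I get $d^T(t_i/A)=1$, hence $d^T(\{t_i : i \in W\}/A) = |W|$ for every $W \subseteq \{1,\dots,r\}$.

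For the three self-sufficiency statements I argue by cases on whether $c \in X_0$. Take $D = As_1\dots s_r$; since $c \notin A$ every $s_i$ is incomplete in $D$, so $\delta(D) = \delta(A) + r$. For $D \subsetneq X \subseteq F$ all $s_i$ already lie in $X$ and $A \subseteq X_0$, so $A' = A \leq T$ and $X_0 \supsetneq A$. If $c \notin X_0$ then $\delta(X) = \delta(X_0) + r$ and $\delta(X_0) > \delta(A)$ because $A \leq E$, giving $\delta(X) > \delta(D)$. If $c \in X_0$, let $m = \#\{t_i \in X_0\}$; then $\delta(X) = \delta(X_0) + (r - m)$, while $\delta(P) \geq \delta(A)+1$ (as $c \in P \setminus A$ and $A \leq C$) and $\delta(Q) \geq d^T(Q) \geq d^T(A) + m = \delta(A) + m$, whence $\delta(X_0) \geq \delta(A) + m + 1$ and again $\delta(X) \geq \delta(A) + r + 1 > \delta(D)$. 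The statements $C \leq F$ and $T \leq F$ are the same computation with $A$ replaced by $C$, resp.\ $T$: a proper extension inside $E$ strictly raises $\delta$ (by $C \leq E$, resp.\ $T \leq E$), and adjoining any $s_i$ to $C$ (resp.\ to $T$) yields an incomplete point because $C \cap T = A$ forces $t_i \notin C$ (resp.\ $c \notin T$).

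For $F \in \K_f$ I would first cut down the configurations to be checked. The quantity $\delta(Y) - f(|Y|)$ does not increase when $Y$ is replaced by $\cl_F(Y)$ (closure lowers $\delta$ and, $f$ being increasing, raises $f(|Y|)$), and it \emph{strictly} decreases when an incomplete $s_i$ is deleted, since the growth bound gives $f(|Y|) - f(|Y|-1) \leq f'(|Y|-1) \leq \tfrac12 < 1$. Hence a global minimiser $X$ of $\delta - f$ may be taken $\leq$-closed in $F$ with every $s_i \in X$ complete. For such an $X$ one checks, adjoining the complete $s_i$ (which stay complete) to any $X_0 \subsetneq Y \subseteq E$, that $X_0 \leq E$; transitivity then gives $P \leq C$, $Q \leq T$ and $A' \leq C,T$, so $\delta(A') = d^T(A')$ and $\delta(Q) \geq d^T(A'\cup T_m) = \delta(A') + m$, where $m = \#\{s_i \in X\} \leq |X_0|$ and $T_m = \{t_i : s_i \in X\} \subseteq Q$. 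It remains to deduce $\delta(X) = \delta(X_0) \geq f(|X_0| + m) = f(|X|)$ from $\delta(X_0) \geq f(|X_0|)$ (from $E \in \K_f$), the surplus $\delta(X_0) = \delta(P)+\delta(Q)-\delta(A') \geq f(|P|) + m$ supplied by the independent $t_i$ and the free point $c$, and the hypothesis $f'(x) \leq \tfrac{1}{2(x+1)}$ together with $m \leq |X_0|$.

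This last inequality is the crux and the main obstacle. The naive linearisation $f(|X_0|+m) \leq f(|X_0|) + m\,f'(|X_0|)$ is too lossy near the integer thresholds of $f$, so the argument must combine the two lower bounds $\delta(X_0) \geq f(|X_0|)$ and $\delta(X_0) \geq f(|P|)+m$ with the integrality of $\delta$ and the precise shape of $f$; this is exactly where the factor $\tfrac12$, rather than the weaker $f'(x)\leq \tfrac{1}{x+1}$ permitted for a merely good function, is needed. Everything else — the bookkeeping, the free-amalgam additivity, and the reduction to $\leq$-closed, all-complete sets — is routine, and the $d$-independence enters only to guarantee the $+m$ surplus of predimension in $Q$ that compensates for the $m$ predimension-neutral points $s_i$.
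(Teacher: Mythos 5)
Your bookkeeping and the three self-sufficiency claims are correct, and your reduction for membership in $\K_f$ --- passing to a $\leq$-closed minimiser of $\delta(Y)-f(\vert Y\vert)$ in which every $s_i$ is complete, so that $X=X_0\cup\{s_i : i\in W\}$ with $\delta(X)=\delta(X_0)=\delta(P)+\delta(Q)-\delta(A')$, $P\leq C$, $Q\leq T$, and the surplus $\delta(Q)-\delta(A')\geq m$ --- is sound, and in fact more careful than the paper's corresponding reduction (the paper instead reduces to the case $T=\cl_T(At_1\ldots t_r)$, $C=\cl_C(Ac)$ and asserts that any $X\subseteq F$ is ``of the same form as $F$''). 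But your proof stops exactly where the lemma's real content lies: you never prove the inequality $\delta(X_0)\geq f(\vert X_0\vert+m)$, you only name it as ``the crux'' and gesture at ingredients. That is a genuine gap, not a routine verification: everything before this point uses only that $f$ is a good function with $f'(x)\leq 1/(x+1)$, and the stronger hypothesis $f'(x)\leq 1/(2(x+1))$ enters nowhere else, so the step where it does the work is precisely the one that cannot be waved through.

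Moreover your diagnosis of what is needed is off: no integrality of $\delta$ beyond $\delta(P)\geq\delta(A')+1$ (which you already have, from $c\in P\setminus A'$ and $A'\leq C$) and no ``precise shape'' of $f$ is used in the paper. The missing idea is a case split comparing the sizes of the two sides of the amalgam, linearising $f$ at the larger side via the tangent bound $f(x+y)\leq f(x)+yf'(x)$. In your notation (assume $m\geq 1$, else $X\subseteq E$ and there is nothing to prove): if $\vert Q\setminus A'\vert\leq m\vert P\setminus A'\vert$, then
\[ f(\vert X\vert)\leq f(\vert P\vert)+(\vert Q\setminus A'\vert+m)f'(\vert P\vert)\leq f(\vert P\vert)+\tfrac{m(\vert P\setminus A'\vert+1)}{\vert P\vert+1}\leq f(\vert P\vert)+m\leq\delta(P)+m\leq\delta(X_0),\]
using only $f'(x)\leq 1/(x+1)$, $P\subseteq C\in\K_f$ and your surplus $\delta(Q)-\delta(A')\geq m$; if $\vert Q\setminus A'\vert\geq m\vert P\setminus A'\vert$, then $\vert Q\setminus A'\vert\geq m$ and $\vert Q\setminus A'\vert\geq\vert P\setminus A'\vert$ (both factors being at least $1$), so $\vert P\setminus A'\vert+m\leq 2\vert Q\vert$ and
\[ f(\vert X\vert)\leq f(\vert Q\vert)+(\vert P\setminus A'\vert+m)f'(\vert Q\vert)\leq f(\vert Q\vert)+1\leq\delta(Q)+1\leq\delta(X_0),\]
and it is only here that $f'(x)\leq 1/(2(x+1))$ is invoked. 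This is exactly the paper's Case 1/Case 2 computation (with $C,T,A,r$ in place of your $P,Q,A',m$); with it inserted, your argument closes and is otherwise correct.
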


\begin{proof} Suppose $C \subset V \subseteq F$. If $V\cap T = A$ then (by construction) $\delta(V) = \delta(C) + \vert V\setminus C\vert$; if $V\cap T \supset A$ then $\delta(V) \geq \delta(C) + \delta(V\cap T) - \delta(A) > \delta(C)$. In either case, $\delta(V) > \delta(C)$, so $C \leq F$. A similar argument shows $T \leq F$. 

By free amalgamation, it is enough to prove the rest of the lemma in the case where $T = \cl_T(At_1\ldots t_r)$ and $C = \cl_C(Ac)$. So henceforth assume this.  Suppose $As_1\ldots s_r \subset V \subseteq F$ has $\delta(V)\leq \delta(As_1\ldots s_r) = \delta(A)+r$.  We can assume that $V \leq F$. Clearly $c \in V$ and therefore $t_1, \ldots, t_r \in V$. It follows that $V = F$. But $\delta(F) = \delta(A) + r + 1$, a contradiction. 

Finally we show that $F \in \K_f$. Let $X \subseteq F$. We need to show $\delta(X) \geq f(\vert X\vert)$. As $X\cap (T\cup C)$ is  the free amalgamation of $X\cap T$ and $X \cap C$ over $X \cap A$, the structure $X$ is of the same form as $F$ (possibly together with some points $s_i$ not lying in any relation in $X$). So it will suffice to prove that $\delta(F) \geq f(\vert F\vert)$. 

\smallskip

\noindent\textit{Case 1:\/} Suppose $\vert T\setminus A\vert \leq r\vert C \setminus A \vert$. 

Note that $\vert F\vert = \vert C \vert+ \vert T\setminus A\vert +r$ and $\delta(F) = \delta(C) + r$. As $C \in \K_f$ we have $\delta(C) \geq f(\vert C\vert)$. Furthermore, as the graph of $f$ lies below its tangent at any point, and $f'(x) \leq 1/2(x+1)\leq 1/(x+1)$ we have
\begin{multline*}
f(\vert F \vert) \leq f(\vert C \vert) + (\vert T\setminus A\vert +r)f'(\vert C \vert) \leq \\
f(\vert C \vert)+  \frac{1}{(\vert C \vert+1)}r (\vert C\setminus A\vert +1) \leq \delta(C)+r = \delta(F)\end{multline*}
as required.

\smallskip

\noindent\textit{Case 2:\/} Suppose $\vert T\setminus A\vert \geq r\vert C \setminus A \vert$.

This is similar. We have  $\vert F\vert = \vert T \vert+ \vert C\setminus A\vert +r$ and $\delta(F) = \delta(T) + 1$. Then 
\begin{multline*}
f(\vert F \vert) \leq f(\vert T \vert) + (\vert C\setminus A\vert +r)f'(\vert T \vert) \leq \\
f(\vert T \vert)+  \frac{1}{2\vert T \vert} (\vert C\setminus A\vert +r) \leq \delta(T)+1 = \delta(F),\end{multline*}
using the fact that $\vert T\setminus A\vert \geq \vert C\setminus A\vert, r$. 
\end{proof}

\begin{proof}[Proof of Proposition \ref{nonorthog}]  Recall that we are assuming that the language $L$ contains a $3$-ary relation symbol $R$, so we can use the previous lemma.  Let $A = \acl(\a)$ and  $B = \acl(Ab)$.

First, we note that it is enough to prove the proposition in the case where $\y$ is $d$-independent over $\a$ (that is, $d(\y/\a) = r$). To see this, take $\y_1 \subseteq \y$ which is $d$-independent over $\a$ and has $\y \in \acl(\y_1\a)$; extend this to an $r$-tuple $\y'$ which is $d$-independent over $\a$. If  $\x \in P^{r+2}$ has $\y_1 \in \acl(\a\x)$, then $\y \in \acl(\a\x)$.

\noindent\textit{Step 1:\/} We first assume that $\y = (s_1,\ldots, s_r)$ is $d$-independent over $\a$ and $A\y \leq M_f$. We shall show that there is $(b_0,\ldots, b_r) \in P^{r+1}$ with $\y \in \acl(\a, b_0,\ldots, b_r)$. 

We apply Lemma \ref{tech} with $T$  the free amalgam of $r$ copies $B_j$ ($1 \leq j \leq r$) of $B$ over $A$ and $C$ another copy of $B$. Let $b_1,\ldots, b_r, b_0$ be the corresponding copies of $b$ (over $A$) inside $B_1, \ldots, B_r, C$ respectively. Let $F$ be the disjoint union over $A$ of $A\y$, $C$ and  $T$, but with the extra relations $R( b_j, s_j, b_0)$, where $1 \leq j \leq r$, as in the lemma. Then by the lemma,
\begin{enumerate}
\item $A\y \leq F$;
\item $B_j \leq F$; and
\item $F \in \K_f$.
\end{enumerate}

Then by (i), (iii) and the $\leq$-extension property  we can assume $F \leq M_f$; by (ii), we then have  $\x = (b_0,b_1,\ldots, b_r) \in P^{r+1}$; then, because of  the relations $R(b_j, s_j, b_0)$ we have $s_j \in \acl(b_0, b_j, A)$, so $\y \in \acl(\a\x)$, as required.

\noindent\textit{Step 2:\/} Now let $\y = (t_1,\ldots, t_r)$ be $d$-independent over $A$ and let $T = \acl(A\y)$. Let $C$ be a copy of $B$ over $A$ with $c$ the copy of $b$ over $A$ inside $C$, and let $F$ be constructed as in the lemma. As in step 1, we can assume that $F \leq M_f$. So $c \in P$ and  $\y \in \acl(\a, c, s_1,\ldots, s_r)$. But by step 1 (and $As_1\ldots s_r \leq F$) the tuple $(s_1,\ldots, s_r)$ is in $\acl(\a\z)$ for some $\z \in P^{r+1}$. The result follows.
\end{proof}

\end{document}